\documentclass{amsart}
\usepackage[english]{babel}
\usepackage{amsmath,amssymb,enumerate,latexsym}
\usepackage{hyperref}

\newcommand{\tmop}[1]{\ensuremath{\operatorname{#1}}}
\newtheorem{lemma}{Lemma}[section]
\newtheorem{theorem}{Theorem}[section]
\newtheorem*{conj}{Chern's Conjecture}
\numberwithin{equation}{section}

\begin{document}

\title[Rigidity of minimal submanifolds]{Rigidity of Minimal Submanifolds in Spheres of Higher Codimension}

\author{Li Lei}
\address{School of Mathematical Sciences \\ Chongqing Normal University \\ Chongqing 401131 \\ China}
\email{leili@cqnu.edu.cn}

\keywords{Chern conjecture; rigidity
theorem; scalar curvature; the second fundamental form}
\subjclass[2010]{53C24; 53C42}

\begin{abstract}
  We investigate Chern's conjecture for minimal submanifolds of higher 
  codimension in the unit sphere. We establish two integral inequalities for closed minimal 
  submanifolds. Then we obtain a new rigidity theorem under a pinching condition 
  on the squared norm of the second fundamental form.
\end{abstract}
\maketitle

\section{Introduction}

In the late 1960s, Simons {\cite{Simons1968}} and Chern--do Carmo--Kobayashi 
{\cite{CdK1970}} proved that if $M$ is an $n$-dimensional closed minimal
submanifold in $\mathbb{S}^{n + p}$, then the squared length of the second
fundamental form satisfies
\[ \int_M \left[ \left( 2 - \frac{1}{p} \right) S - n \right] S \geqslant 0.
\]
It follows that if $S \leqslant \frac{n}{2 - 1 / p}$, then either $S \equiv 0$
or $S \equiv \frac{n}{2 - 1 / p}$. Moreover, Chern-do Carmo-Kobayashi
 proved that the only compact minimal submanifolds satisfying
$S = \frac{n}{2 - 1 / p}$ are Clifford minimal hypersurfaces and the Veronese 
surface. Afterwards, Li--Li {\cite{LL1992}} and Chen--Xu {\cite{CX}} improved
Simons' pinching constant to $\frac{2 n}{3}$ for $p \geqslant 3$. For an
$n$-dimensional minimal submanifold $M$ in a unit sphere, it follows from the
Gauss equation that the scalar curvature of $M$ is given by $R = n (n - 1) -
S$. This implies that $S$ is a constant if and only if $R$ is a constant. In
1968 and 1970, Chern proposed the following famous conjecture
{\cite{Chern1968,CdK1970}}.

\begin{conj}
  Let $M$ be a closed minimal submanifold with constant scalar curvature in
  the unit sphere $\mathbb{S}^{n + 1}$. Then the set of all possible values of
  the scalar curvature of $M$ is discrete.
\end{conj}

Over the past four decades, substantial progress has been made on
Chern's conjecture (see {\cite{Cheng1997,GT2012,GXXZ2016,SWY,Xin}} for more
details). In 1983, Peng and Terng {\cite{PT1983a}} made a breakthrough on
Chern's conjecture. They verified that if $M^n$ is a closed minimal hypersurface
with constant scalar curvature in $\mathbb{S}^{n + 1}$, and if $S > n$, then
$S > n + \frac{1}{12 n}$. In particular, for the case $n = 3$, they proved
that if $S > 3$, then $S \geqslant 6$. Afterwards, Yang--Cheng
{\cite{YC1990,YC1994,YC1998}} improved the pinching constant $\frac{1}{12 n}$
to $\frac{n}{3}$. Later, Suh--Yang {\cite{SY2007}} improved this pinching
constant to $\frac{3 n}{7}$. In 1993, Chang {\cite{Chang1993}} solved Chern's
conjecture in dimension three. Deng--Gu--Wei {\cite{DGW2017}} proved that any
closed Willmore minimal hypersurface with constant scalar curvature in
$\mathbb{S}^5$ must be isoparametric. He--Xu--Zhao {\cite{HXZ2026}} proved that
a closed minimal hypersurface in $\mathbb{S}^5$ with constant scalar curvature
and constant third mean curvature is isoparametric. Ge--Liu--Luo--Yan
{\cite{GLLY}} and Deng--Kou {\cite{DK}} obtained corresponding rigidity results
under constant Gauss--Kronecker curvature. For closed minimal hypersurfaces in
spheres, the scalar curvature pinching phenomenon without the assumption of
constant scalar curvature was investigated by several authors
{\cite{CI1999,DX2011,LXX2017,PT1983b,WX2007,XX2017,Zhang2010}}.

In the higher codimensional situation, Calabi \cite{Calabi} classified all minimal
immersions of $\mathbb{S}^2$ with constant Gaussian curvature into
$\mathbb{S}^N$. Kozlowski--Simon {\cite{KS}} proved that if a complete minimal
surface in $\mathbb{S}^{2 + p}$ satisfies $\frac{4}{3} \leqslant S \leqslant
\frac{5}{3}$, then $S \equiv \frac{4}{3}$ or $\frac{5}{3}$. Denote by $S_2$
the second largest eigenvalue of the fundamental matrix $\langle A_{\alpha},
A_{\beta} \rangle$, Lu {\cite{Lu}} proved a rigidity theorem for compact
minimal submanifolds in the sphere satisfying $S + S_2 \leqslant n$. Recently,
Ding--Ge--Li {\cite{DGL2025,DGL2026}} made progress on Simon's third gap
conjecture for closed minimal surfaces in spheres. Ge--Li--Zhang {\cite{GLZ}}
proved a second gap theorem for compact minimal submanifolds with flat normal
bundle.

In this paper, we investigate minimal submanifolds of higher codimension in spheres. 
We establish  the following integral inequalities for
closed minimal submanifolds in spheres.

\begin{theorem}
  Let $M$ be an $n$-dimensional compact minimal submanifold in $\mathbb{S}^{n
  + p}$. Then
  \[ \int_M | \nabla^2 h |^2 \leqslant \int_M (8 S - 2 n - 3) | \nabla h |^2 .
  \]
  The equality holds if and only if at each point of $M$, either $S = 0$, or there is an orthonormal frame
  such that the components of $h$ and $\nabla h$ satisfy $h^1_{1 1} = - h^1_{2
  2} = h^2_{1 2} = h^2_{2 1},$ $h_{111}^1 = - h_{122}^1 = h_{112}^2 = -
  h_{222}^2,$ $h_{112}^1 = - h_{222}^1 = - h_{111}^2 = h_{122}^2,$ and all
  other independent components are $0$.
\end{theorem}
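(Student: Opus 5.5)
We establish the inequality by integrating a Bochner-type identity for $\nabla h$ and then bounding the curvature terms pointwise. Use a local orthonormal frame $\{e_i\}$ of $TM$ and $\{e_\alpha\}$ of the normal bundle. Write $h^\alpha_{ij}$, $h^\alpha_{ijk}$, $h^\alpha_{ijkl}$ for the components of $h,\nabla h,\nabla^2 h$. Minimality together with the Codazzi equation makes $h^\alpha_{ijk}$ totally symmetric in $i,j,k$ and trace-free in every pair.

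\emph{Step 1: a Bochner identity for $\nabla h$.} From the Ricci identities we compute
\[
\tfrac12\Delta|\nabla h|^2=|\nabla^2 h|^2+\sum h^\alpha_{ijk}(\Delta h^\alpha_{ij})_k+\sum h^\alpha_{ijk}\,\mathcal{R}^\alpha_{ijk},
\]
where $\mathcal{R}$ collects the commutator terms $h^\alpha_{ijkl}-h^\alpha_{ijlk}$. By the Gauss and Ricci equations, $\mathcal{R}$ is a combination of $h^\alpha_{ijk}$ with $\delta$'s and with quadratic expressions in $h$. We then substitute Simons' formula
\[
\Delta h^\alpha_{ij}=n h^\alpha_{ij}+\textstyle\sum(\text{cubic in }h),
\]
differentiate it once, and integrate over $M$. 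The term $\sum h^\alpha_{ijk}(\Delta h^\alpha_{ij})_k$ is handled by integrating by parts, which gives $\int\sum h^\alpha_{ijk}(\Delta h^\alpha_{ij})_k=-\int|\Delta h|^2$. Alternatively we keep the differentiated Simons expression directly.

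\emph{Step 2: collect terms.} After integrating, we obtain an identity of the form
\[
\int|\nabla^2h|^2=\int\bigl(-(2n+3)|\nabla h|^2+Q(h,\nabla h)\bigr).
\]
The linear part, coming from the sphere curvature $1$, contributes $-(2n+3)$ after simplification, which matches the constant in the statement. Here $Q$ is a sum of terms of type $h*h*\nabla h*\nabla h$, namely $\sum h^\alpha_{ij}h^\beta_{kl}h^\gamma_{\cdot\cdot\cdot}h^\delta_{\cdot\cdot\cdot}$ contracted in the patterns dictated by the Gauss and Ricci equations. This includes pieces like $\sum \langle A_\alpha,A_\beta\rangle\langle\nabla A_\alpha,\nabla A_\beta\rangle$, $|[A_\alpha,A_\beta]|$-type terms paired with $\nabla h$, and Ricci-curvature terms $\sum R_{ij}h^\alpha_{ikl}h^\alpha_{jkl}$.

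\emph{Step 3 (main obstacle): a pointwise algebraic estimate}
\[
Q(h,\nabla h)\le 8S|\nabla h|^2.
\]
We prove it pointwise, since the bound is homogeneous. First fix $\nabla h$ and treat $Q$ as a quadratic form in $h$. We diagonalize using the freedom to rotate the normal frame, for example choosing it so that $\langle A_\alpha,A_\beta\rangle$ is diagonal. Each group of terms is then bounded with the Cauchy--Schwarz inequality and with commutator estimates of Li--Li/Böttcher--Wenzel type, namely $|[A,B]|^2\le2|A|^2|B|^2$ for trace-free symmetric matrices. The constant $8$ should arise as the sum of the individual bounds. The delicate point is making these bounds simultaneously sharp, and we expect that bookkeeping to be the hardest step. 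Our first attempt is to split $Q$ into a normal-curvature part and a tangential Ricci part. On the Ricci part we use $R_{ij}\ge$ the lower bound coming from $-\sum_\alpha (A_\alpha^2)_{ij}$, since $nH=0$. The normal part is then handled via Li--Li's lemma applied to the matrices $(h^\alpha_{ij})$ and $(h^\beta_{ijk})_{ij}$.

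\emph{Step 4: equality.} Equality in the integral inequality forces equality pointwise in every Cauchy--Schwarz and commutator estimate used in Step 3. Equality in Li--Li's inequality forces at most two nonzero $A_\alpha$, each of rank-two form $\begin{pmatrix}1&0\\0&-1\end{pmatrix}$ and $\begin{pmatrix}0&1\\1&0\end{pmatrix}$ on a common $2$-plane. This gives $h^1_{11}=-h^1_{22}=h^2_{12}$. The remaining equality conditions then force $\nabla h$ to live on the same $2$-plane and on the same normal directions. Combining this with total symmetry and trace-freeness yields the stated relations among the $h^\alpha_{ijk}$. Conversely, we check directly that such $h,\nabla h$ make $Q=8S|\nabla h|^2$ hold.
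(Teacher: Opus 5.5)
\textbf{Step 3 is missing, and the tools you propose cannot give the constant $8$.} Your Steps 1--2 follow the paper's route, provided you take the second option in Step 1. Substituting the differentiated Simons identity pointwise gives the exact formula (\ref{lapABR}). Integrating $\sum h^\alpha_{ijk}(\Delta h^\alpha_{ij})_k$ by parts instead produces $-\int|\Delta h|^2$, i.e.\ terms of degree up to six in $h$, not the $h*h*\nabla h*\nabla h$ form you use in Step 2.

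The genuine gap is Step 3, which is the whole content of the theorem. Put $X=\sum_{k,\alpha,\beta}(\langle A_\alpha A_\beta,B_{\alpha,k}B_{\beta,k}\rangle-\langle A_\alpha A_\beta,B_{\beta,k}B_{\alpha,k}\rangle)=\frac12\sum_{k,\alpha,\beta}\langle[A_\alpha,A_\beta],[B_{\alpha,k},B_{\beta,k}]\rangle$. The quartic terms to control are $3\langle\hat R,\tilde R\rangle+3\sum\langle A_\alpha^2,B_{\beta,k}^2\rangle+\sum\langle A_\alpha,A_\beta\rangle\langle B_{\alpha,k},B_{\beta,k}\rangle+6X$. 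Suppose you bound these groups separately. Even granting the sharp estimate $\langle\hat R,\tilde R\rangle\le S|\nabla h|^2$, the easy bounds $S|\nabla h|^2$ for the two middle sums, and $X\le\frac12 S|\nabla h|^2$ from DDVV, you only get $3+3+1+3=10$. The tools you actually name are weaker still. B\"ottcher--Wenzel is weaker than DDVV for $p\ge3$. Plain Cauchy--Schwarz $|\hat R|\,|\tilde R|$ can exceed $S|\nabla h|^2$: for $p=1$ and $A=\mathrm{diag}(1,1,-1,-1)$ one has $|\hat R|^2=24>S^2$, while $|\tilde R|=|\nabla h|^2$ is possible. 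So the constant $8$ cannot be ``the sum of the individual bounds''; the terms have to be coupled.

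\textbf{What the paper does to reach $8$.} It splits $6X=3X+X+2X$ and uses four estimates:
\begin{itemize}
\item $\sum\langle A_\alpha^2,B_{\beta,k}^2\rangle+X\le S|\nabla h|^2$;
\item $\sum\langle A_\alpha,A_\beta\rangle\langle B_{\alpha,k},B_{\beta,k}\rangle+X\le S|\nabla h|^2$;
\item $2X\le S|\nabla h|^2$, from DDVV;
\item $\langle\hat R,\tilde R\rangle\le S|\nabla h|^2$ (Lemma \ref{AijBijk}).
\end{itemize}
The first two are Lemma \ref{A2B2+1/2}. They come from the bivector inequality $\langle u,v\rangle\langle x,y\rangle+\langle u\wedge v,x\wedge y\rangle\le\frac12(|u|^2|y|^2+|v|^2|x|^2)$ of Lemma \ref{uvxy}, applied to vectors built from the entries of $A_\alpha$ and $B_{\beta,k}$. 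Lemma \ref{AijBijk} uses the total symmetry of $h^\alpha_{ijk}$ and the positive-semidefiniteness Lemma \ref{aibi}. Together these give $3+3+1+1=8$. Without these or equivalent ingredients Step 3 is unproved, and Step 4 has nothing to rest on.

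\textbf{A caveat on the equality case.} At a point where $\nabla h=0$ every quartic term vanishes, so equality holds there for arbitrary $h$. For example, Clifford minimal hypersurfaces have $\nabla h\equiv0$ and give equality in the integral inequality, but $h$ is not of the stated form. Equality can force the DDVV/Li--Li configuration only where $S\neq0$ and $\nabla h\neq0$. Your Step 4 overlooks this case, and so does the equality clause as stated.
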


\begin{theorem}
  Let $M$ be an $n$-dimensional compact minimal submanifold in
  $\mathbb{S}^{n + p}$. For any positive constant $b$ such that $b \geqslant
  \frac{8}{63} (4 S - n)$, we have
  \[ \int_M \left[ \frac{7}{3 b} S^3 - \frac{1}{2} S^2 + 7 bS - \frac{2}{9}
     (21 b - 2) n \right] S \geqslant 0. \]
\end{theorem}

As a consequence of the above integral inequality, we prove a pinching theorem
for closed minimal submanifolds in spheres.

\begin{theorem}
  Let $M$ be an $n (\geqslant 3)$-dimensional compact minimal submanifold in
  $\mathbb{S}^{n + p}$. If
  \[ S \leqslant \frac{2 n}{3} + \frac{n}{500} - \frac{1}{180}, \]
  then $S \equiv 0$; equivalently, $M$ is totally geodesic.
\end{theorem}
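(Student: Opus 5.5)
We derive the statement from the integral inequality of Theorem 1.2 by choosing the free constant $b$ well. Set
\[ s_0 := \frac{2n}{3} + \varepsilon, \qquad \varepsilon := \frac{n}{500} - \frac{1}{180}, \]
so that $\varepsilon > 0$ for $n \geqslant 3$, and assume $S \leqslant s_0$. For a constant $b>0$ write
\[ f_b(s) = \frac{7}{3b} s^3 - \frac{1}{2}s^2 + 7bs - \frac{2}{9}(21b-2)n . \]
Theorem 1.2 then gives $\int_M f_b(S)\, S \geqslant 0$, provided $b \geqslant \frac{8}{63}(4s_0 - n)$. Since $S \leqslant s_0$, this guarantees the hypothesis $b \geqslant \frac{8}{63}(4S-n)$ pointwise.

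\textbf{Step 1: reduce to one inequality.} We have $f_b'(s) = \frac{7}{b}s^2 - s + 7b$, whose discriminant is $1 - 196 < 0$, so $f_b$ is strictly increasing. Hence if $f_b(s_0) < 0$, then $f_b(S) < 0$ wherever $0 \leqslant S \leqslant s_0$. The integrand $f_b(S)S$ is then $\leqslant 0$, with equality only where $S = 0$. Together with $\int_M f_b(S)S \geqslant 0$ this forces $S \equiv 0$, and continuity of $S$ is all that is needed for this. So everything reduces to finding an admissible $b$ with $f_b(s_0) < 0$.

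\textbf{Step 2: choose $b$.} The point is the identity $7s_0 - \frac{14}{3}n = 7\varepsilon$. With it we can write
\[ f_b(s_0) = \frac{7 s_0^3}{3b} + 7\varepsilon b - \frac{s_0^2}{2} + \frac{4n}{9} . \]
At $\varepsilon = 0$ the $b$-term would disappear, and letting $b \to \infty$ would give $-\frac{2n^2}{9} + \frac{4n}{9} < 0$. For $\varepsilon > 0$ we minimize in $b$ instead, taking
\[ b = \sqrt{\frac{s_0^3}{3\varepsilon}} . \]
This gives
\[ f_b(s_0) = \frac{14}{\sqrt{3}}\, s_0^{3/2}\, \varepsilon^{1/2} - \frac{s_0^2}{2} + \frac{4n}{9} . \]
This $b$ is of order $n$, and it must exceed $\frac{8}{63}(4s_0 - n) \approx \frac{40 n}{189}$. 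That holds because $\varepsilon$ is only of size $n/500$, so $b \approx n\sqrt{8\cdot 500/81}$, far larger. We will verify this admissibility inequality explicitly for all $n \geqslant 3$.

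\textbf{Step 3: the numerical inequality (main obstacle).} It remains to show, for every $n \geqslant 3$, that
\[ \frac{196}{3}\, s_0^3\, \varepsilon < \Bigl(\frac{s_0^2}{2} - \frac{4n}{9}\Bigr)^2 , \]
noting that the right-hand base is positive for $n \geqslant 3$. To leading order in $n$, the left side is $\frac{196}{3}\cdot\frac{8}{27}\cdot\frac{1}{500}\, n^4 \approx 0.0387\, n^4$, while the right side is $\frac{4}{81} n^4 \approx 0.0494\, n^4$. So the inequality holds for large $n$ with a definite margin.

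The delicate part is controlling the lower-order terms uniformly down to $n = 3$; the constant $-\frac{1}{180}$ in $\varepsilon$ is presumably tuned for exactly this. We plan to substitute $s_0$ and $\varepsilon$ explicitly and expand both sides as polynomials in $n$. We would then check positivity of the difference either by grouping terms with $n \geqslant 3$, or by verifying $n = 3$ directly and showing the difference is increasing in $n$. If the optimal $b$ gives inconvenient radicals, we can instead pick a nearby rational multiple of $n$ for $b$ and check $f_b(s_0) < 0$ as a polynomial inequality.
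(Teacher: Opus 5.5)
Your argument is the paper's own: apply the integral inequality of Theorem 1.2, use that $f_b$ is increasing, and check $f_b(s_0)<0$ at $s_0=\frac{1003n}{1500}-\frac{1}{180}$. The only difference is the choice of $b$: the paper fixes the rational value $b=\frac{6n^2}{n-2}$, whereas you take the minimizer $b=\sqrt{s_0^3/(3\varepsilon)}$, so the paper's ``direct calculation'' $f_{6n^2/(n-2)}(s_0)<0$ already implies your Step 3.

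One caution if you carry out Step 3 yourself. The margin is tightest at $n=4,5$, where $\frac{196}{3}s_0^3\varepsilon$ is about 95--96\% of the right-hand side. Moreover, the difference $\bigl(\frac{s_0^2}{2}-\frac{4n}{9}\bigr)^2-\frac{196}{3}s_0^3\varepsilon$ drops from about $0.21$ at $n=3$ to about $0.15$ at $n=4$. So your ``check $n=3$ and show the difference is increasing'' route fails, and the small dimensions have to be verified individually before bounding the tail.
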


\section{Preliminaries}

Let $M^n$ be a submanifold in $\mathbb{S}^{n + p}$. We denote by
$\overline{\nabla}$ the Levi-Civita connection of $\mathbb{S}^{n + p}$, and by
$\nabla$ the connection induced on $M$. Let $h$ denote the second fundamental
form of $M$, which is given by
\[ \overline{\nabla}_X Y = \nabla_X Y + h (X, Y) . \]
Denote by $\nabla^{\bot}$ the connection of the normal bundle $N M$. For a
normal vector field $\xi$, the Weingarten map is given by
\[ \overline{\nabla}_X \xi = \nabla^{\bot}_X \xi - A_{\xi} (X) . \]

We choose a local orthonormal frame $\{ e_i \}_{1 \leqslant i \leqslant n}$
for the tangent bundle of $M$, and $\{ \nu_{\alpha} \}_{1 \leqslant \alpha
\leqslant p}$ for the normal bundle of $M$. Let $h^{\alpha}_{i j} = \langle h
(e_i, e_j), \nu_{\alpha} \rangle$ be the components of the second fundamental
form. Let $A_{\alpha} = A_{\nu_{\alpha}}$. Thus $A_{\alpha} (e_j) = \sum_i
h^{\alpha}_{i j} e_i$. Let $S = \sum_{\alpha, i, j} (h^{\alpha}_{i j})^2$ be
the squared length of the second fundamental form.

We denote by $h_{i j k}^{\alpha}$ the covariant derivative of $h^{\alpha}_{i
j}$, denote by $h_{i j k l}^{\alpha}$ the covariant derivative of
$h^{\alpha}_{i j k}$, and so on. By the Codazzi equation, we have
$h^{\alpha}_{i j k} = h^{\alpha}_{i k j}$.

Let $R$ and $R^{\bot}$ be the curvature tensors of the tangent bundle $T M$ and
the normal bundle $N M$, respectively. We have the following Gauss and Ricci equations.
\begin{equation}
  R_{i j k l} = \delta_{i k} \delta_{j l} - \delta_{i l} \delta_{j k} +
  \sum_{\alpha} (h^{\alpha}_{i k} h^{\alpha}_{j l} - h^{\alpha}_{i l}
  h^{\alpha}_{j k}), \label{GaussE}
\end{equation}
\begin{equation}
  R^{\bot}_{\alpha \beta k l} = \sum_i (h^{\alpha}_{i k} h^{\beta}_{i l} -
  h^{\alpha}_{i l} h^{\beta}_{i k}) . \label{RicciE}
\end{equation}

Using the Ricci identity, we get
\begin{align}
  h^{\alpha}_{i j k l} = & h^{\alpha}_{i j l k} + \sum_m (h^{\alpha}_{j m}
  R_{m i k l} + h^{\alpha}_{i m} R_{m j k l}) + \sum_{\beta} h^{\beta}_{i j}
  R^{\bot}_{\beta \alpha k l} \nonumber\\
  = & h^{\alpha}_{i j l k} + \delta_{jl} h^{\alpha}_{ik} - \delta_{jk}
  h^{\alpha}_{il} + \delta_{il} h^{\alpha}_{jk} - \delta_{ik} h^{\alpha}_{jl} 
  \label{ricciid}\\
  & + \sum_{\beta, m} (h^{\alpha}_{im} h^{\beta}_{jl} h^{\beta}_{km} +
  h^{\alpha}_{jm} h^{\beta}_{il} h^{\beta}_{km} + h^{\alpha}_{lm}
  h^{\beta}_{ij} h^{\beta}_{km}  \nonumber\\
  &  - h^{\alpha}_{im} h^{\beta}_{jk} h^{\beta}_{lm} -
  h^{\alpha}_{jm} h^{\beta}_{ik} h^{\beta}_{lm} - h^{\alpha}_{km}
  h^{\beta}_{ij} h^{\beta}_{lm}) . \nonumber
\end{align}

Suppose that $M$ is minimal. Thus, $\sum_i h^{\alpha}_{i i} = 0$. Contracting
(\ref{ricciid}) we obtain
\begin{equation}
  \sum_l h^{\alpha}_{i l j l} = nh^{\alpha}_{ij} + \sum_{\beta, l, m} (2
  h^{\beta}_{il} h^{\beta}_{jm} h^{\alpha}_{lm} - h^{\beta}_{ij}
  h^{\alpha}_{lm} h^{\beta}_{lm} - h^{\beta}_{il} h^{\alpha}_{jm}
  h^{\beta}_{lm} - h^{\alpha}_{im} h^{\beta}_{jl} h^{\beta}_{lm}) .
  \label{Laph}
\end{equation}
We then have Simons' formula.
\begin{equation}
  \frac{1}{2} \Delta S = | \nabla h |^2 + n S - \sum_{\alpha, \beta} [|
  [A_{\alpha}, A_{\beta}] |^2 + \langle A_{\alpha}, A_{\beta} \rangle^2],
  \label{LapS}
\end{equation}
where $[\cdot, \cdot]$ denotes the commutator.

Taking the covariant derivative of (\ref{ricciid}) with respect to $e_l$, we
obtain
\begin{align}
  \sum_l h^{\alpha}_{i j k l l} = & \sum_l h^{\alpha}_{ijlkl} + 2
  h^{\alpha}_{ijk} \nonumber\\
  & + \sum_{\beta, l, m} (h^{\beta}_{jl} h^{\beta}_{km} h^{\alpha}_{ilm} +
  h^{\beta}_{il} h^{\beta}_{km} h^{\alpha}_{jlm} + h^{\beta}_{km}
  h^{\alpha}_{lm} h^{\beta}_{ijl}  \nonumber\\
  & + h^{\beta}_{ij} h^{\alpha}_{lm} h^{\beta}_{klm} + h^{\beta}_{il}
  h^{\alpha}_{jm} h^{\beta}_{klm} + h^{\alpha}_{im} h^{\beta}_{jl}
  h^{\beta}_{klm}  \label{hijkll}\\
  & - h^{\beta}_{jk} h^{\beta}_{lm} h^{\alpha}_{ilm} - h^{\beta}_{ik}
  h^{\beta}_{lm} h^{\alpha}_{jlm} - h^{\beta}_{ij} h^{\beta}_{lm}
  h^{\alpha}_{klm} \nonumber\\
  &  - h^{\alpha}_{km} h^{\beta}_{lm} h^{\beta}_{ijl} -
  h^{\alpha}_{jm} h^{\beta}_{lm} h^{\beta}_{ikl} - h^{\alpha}_{im}
  h^{\beta}_{lm} h^{\beta}_{jkl}) . \nonumber
\end{align}
Using the Ricci identity, we have
\begin{align}
  \sum_l h^{\alpha}_{i j l k l} = & \sum_l h^{\alpha}_{i j l l k} \nonumber\\
  & + \sum_{l, m} (h^{\alpha}_{j l m} R_{m i k l} + h^{\alpha}_{i l m} R_{m j
  k l} + h^{\alpha}_{i j m} R_{m l k l}) + \sum_{\beta, l} h^{\beta}_{i j l}
  R^{\bot}_{\beta \alpha k l} \nonumber\\
  = & \sum_l h^{\alpha}_{ijllk} + (n + 1) h^{\alpha}_{ijk} \\
  & + \sum_{\beta, l, m} (h^{\beta}_{jm} h^{\beta}_{kl} h^{\alpha}_{ilm} +
  h^{\beta}_{im} h^{\beta}_{kl} h^{\alpha}_{jlm} + h^{\beta}_{km}
  h^{\alpha}_{lm} h^{\beta}_{ijl}  \nonumber\\
  &  - h^{\beta}_{km} h^{\beta}_{lm} h^{\alpha}_{ijl} -
  h^{\beta}_{jk} h^{\beta}_{lm} h^{\alpha}_{ilm} - h^{\beta}_{ik}
  h^{\beta}_{lm} h^{\alpha}_{jlm} - h^{\alpha}_{km} h^{\beta}_{lm}
  h^{\beta}_{ijl}) . \nonumber
\end{align}
Taking the covariant derivative of (\ref{Laph}) with respect to $e_k$, we have
\begin{align}
  \sum_l h^{\alpha}_{i j l l k} = & nh^{\alpha}_{ijk} \nonumber\\
  & + \sum_{\beta, l, m} (2 h^{\beta}_{il} h^{\beta}_{jm} h^{\alpha}_{klm} +
  2 h^{\beta}_{jm} h^{\alpha}_{lm} h^{\beta}_{ikl} + 2 h^{\beta}_{il}
  h^{\alpha}_{lm} h^{\beta}_{jkm}  \nonumber\\
  & - h^{\beta}_{jl} h^{\beta}_{lm} h^{\alpha}_{ikm} - h^{\beta}_{il}
  h^{\beta}_{lm} h^{\alpha}_{jkm} - h^{\beta}_{ij} h^{\beta}_{lm}
  h^{\alpha}_{klm}  \label{hijllk}\\
  & - h^{\alpha}_{lm} h^{\beta}_{lm} h^{\beta}_{ijk} - h^{\alpha}_{jm}
  h^{\beta}_{lm} h^{\beta}_{ikl} - h^{\alpha}_{im} h^{\beta}_{lm}
  h^{\beta}_{jkl} \nonumber\\
  &  - h^{\beta}_{ij} h^{\alpha}_{lm} h^{\beta}_{klm} -
  h^{\beta}_{il} h^{\alpha}_{jm} h^{\beta}_{klm} - h^{\alpha}_{im}
  h^{\beta}_{jl} h^{\beta}_{klm}) . \nonumber
\end{align}
Combining (\ref{hijkll})-(\ref{hijllk}), we obtain
\begin{align}
  \sum_l h^{\alpha}_{i j k l l} = & (2 n + 3) h^{\alpha}_{ijk} \nonumber\\
  & + \sum_{\beta, l, m} (2 h^{\beta}_{jm} h^{\beta}_{kl} h^{\alpha}_{ilm} +
  2 h^{\beta}_{im} h^{\beta}_{kl} h^{\alpha}_{jlm} + 2 h^{\beta}_{im}
  h^{\beta}_{jl} h^{\alpha}_{klm} + 2 h^{\beta}_{km} h^{\alpha}_{lm}
  h^{\beta}_{ijl}  \nonumber\\
  & + 2 h^{\beta}_{jm} h^{\alpha}_{lm} h^{\beta}_{ikl} + 2 h^{\beta}_{im}
  h^{\alpha}_{lm} h^{\beta}_{jkl} - 2 h^{\beta}_{jk} h^{\beta}_{lm}
  h^{\alpha}_{ilm} - 2 h^{\beta}_{ik} h^{\beta}_{lm} h^{\alpha}_{jlm}
  \nonumber\\
  & - 2 h^{\beta}_{ij} h^{\beta}_{lm} h^{\alpha}_{klm} - 2 h^{\alpha}_{km}
  h^{\beta}_{lm} h^{\beta}_{ijl} - 2 h^{\alpha}_{jm} h^{\beta}_{lm}
  h^{\beta}_{ikl} - 2 h^{\alpha}_{im} h^{\beta}_{lm} h^{\beta}_{jkl}
  \nonumber\\
  &  - h^{\beta}_{km} h^{\beta}_{lm} h^{\alpha}_{ijl} -
  h^{\beta}_{jm} h^{\beta}_{lm} h^{\alpha}_{ikl} - h^{\beta}_{im}
  h^{\beta}_{lm} h^{\alpha}_{jkl} - h^{\alpha}_{lm} h^{\beta}_{lm}
  h^{\beta}_{ijk}) . \nonumber
\end{align}
We then obtain the following Peng--Terng-type formula.
\begin{align}
  \frac{1}{2} \Delta | \nabla h |^2 = & \sum_{\alpha, i, j, k, l}
  [(h^{\alpha}_{i j k l})^2 + h^{\alpha}_{i j k} h^{\alpha}_{i j k l l}]
  \nonumber\\
  = & | \nabla^2 h |^2 + (2 n + 3) | \nabla h |^2 +  \label{lapdh2}\\
  & \sum_{\alpha, \beta, i, j, k, l, m} \Big[- 6 h_{i l}^{\alpha} h_{j l}^{\beta}
  h_{i k m}^{\alpha} h_{j k m}^{\beta} + 6 h_{i l}^{\beta} h_{j l}^{\alpha}
  h_{i k m}^{\alpha} h_{j k m}^{\beta}  \nonumber\\
  & - 3 h^{\alpha}_{im} h^{\alpha}_{jm} h^{\beta}_{ilk} h^{\beta}_{jl k} -
  h^{\alpha}_{lm} h^{\beta}_{lm} h^{\alpha}_{ijk} h^{\beta}_{ijk} \nonumber\\
  &  - 6 h^{\alpha}_{jl} h^{\alpha}_{km} h^{\beta}_{ijl}
  h^{\beta}_{ikm} + 6 h^{\alpha}_{jl} h^{\alpha}_{km} h^{\beta}_{ijm}
  h^{\beta}_{ikl}\Big] . \nonumber
\end{align}

For each $\alpha, k$, define a linear map $B_{\alpha, k} : T_x M \rightarrow
T_x M$ by
\[ B_{\alpha, k} (X) = (\nabla_{e_k} A)_{\nu_{\alpha}} (X) . \]
Thus $B_{\alpha, k} (e_j) = \sum_i h^{\alpha}_{i j k} e_i$, $| \nabla h |^2 =
\sum_{\alpha, k} | B_{\alpha, k} |^2$.

Define two curvature-type tensors by
\[ \hat{R}_{i j k l} = \sum_{\alpha} (h_{i k}^{\alpha} h_{j l}^{\alpha} - h_{i
   l}^{\alpha} h_{j k}^{\alpha}), \quad \tilde{R}_{i j k l} = \sum_{\alpha, m}
   (h_{i k m}^{\alpha} h_{j l m}^{\alpha} - h_{i l m}^{\alpha} h_{j k
   m}^{\alpha}) . \]
The tensor $\hat{R}$ is the difference between the curvature tensors of the submanifold and the ambient space.

Thus, formula (\ref{lapdh2}) can be rewritten as follows.

\begin{theorem}
  Let $M$ be an $n$-dimensional minimal submanifold in $\mathbb{S}^{n + p}$.
  Then  
  \begin{align}
    \frac{1}{2} \Delta | \nabla h |^2 = & | \nabla^2 h |^2 + (2 n + 3) |
    \nabla h |^2 - 3 \langle \hat{R}, \tilde{R} \rangle \nonumber\\
    & - \sum_{\alpha, \beta, k} \Big[3 \langle A_{\alpha}^2, B_{\beta, k}^2
    \rangle + \langle A_{\alpha}, A_{\beta} \rangle \langle B_{\alpha, k},
    B_{\beta, k} \rangle   \label{lapABR}\\
    &  + 6 (\langle A_{\alpha} A_{\beta}, B_{\alpha, k} B_{\beta,
    k} \rangle - \langle A_{\alpha} A_{\beta}, B_{\beta, k} B_{\alpha, k}
    \rangle)\Big] . \nonumber
  \end{align}
\end{theorem}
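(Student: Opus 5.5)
The identity in the last theorem is a re-expression of the Peng--Terng-type formula (\ref{lapdh2}). The only work is to match each of the six quartic sums there with the matrix and tensor expressions in (\ref{lapABR}). Throughout, $A_\alpha$ and $B_{\alpha,k}$ are symmetric matrices, by the symmetry of $h^\alpha_{ij}$ and the Codazzi equation. We use $\langle X,Y\rangle=\operatorname{tr}(XY^T)$ and $(B_{\beta,k})_{ij}=h^\beta_{ijk}=h^\beta_{ikj}$. The first line of (\ref{lapABR}) is taken directly from (\ref{lapdh2}), so only the bracket needs checking.

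\emph{Step 1: the two simplest terms.} We have
\[ \sum h^\alpha_{im}h^\alpha_{jm}h^\beta_{ilk}h^\beta_{jlk}=\sum_{\alpha,\beta,k}\operatorname{tr}(A_\alpha^2B_{\beta,k}^2)=\sum\langle A_\alpha^2,B_{\beta,k}^2\rangle. \]
This gives the coefficient $-3$. Likewise,
\[ \sum h^\alpha_{lm}h^\beta_{lm}h^\alpha_{ijk}h^\beta_{ijk}=\sum\langle A_\alpha,A_\beta\rangle\langle B_{\alpha,k},B_{\beta,k}\rangle. \]

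\emph{Step 2: the $\pm6$ terms of the first type.} Write $\sum_m h^\alpha_{ikm}h^\beta_{jkm}=(B_{\alpha,k}B_{\beta,k})_{ij}$, after relabelling the summed index so that $k$ indexes $B$. Then
\[ \sum h^\alpha_{il}h^\beta_{jl}h^\alpha_{ikm}h^\beta_{jkm}=\sum\operatorname{tr}(A_\beta A_\alpha B_{\alpha,k}B_{\beta,k}). \]
Using symmetry of all matrices and cyclicity of the trace, this is identified with $\langle A_\alpha A_\beta, B_{\alpha,k}B_{\beta,k}\rangle$ or with the swapped version. The other term similarly gives $\sum\operatorname{tr}(A_\alpha A_\beta B_{\alpha,k}B_{\beta,k})$. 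Care is needed to get the order right, i.e.\ to match $\operatorname{tr}(XY^T)$ with $Y=(B_{\alpha,k}B_{\beta,k})^T=B_{\beta,k}B_{\alpha,k}$. Swapping $\alpha\leftrightarrow\beta$ in the sum may also be used. The result should be
\[ -6\bigl(\langle A_\alpha A_\beta,B_{\alpha,k}B_{\beta,k}\rangle-\langle A_\alpha A_\beta,B_{\beta,k}B_{\alpha,k}\rangle\bigr). \]

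\emph{Step 3: the remaining two terms, which become $-3\langle\hat R,\tilde R\rangle$ (main obstacle).} Expand
\[ \langle\hat R,\tilde R\rangle=\sum_{i,j,k,l}\sum_{\alpha,\beta,m}(h^\alpha_{ik}h^\alpha_{jl}-h^\alpha_{il}h^\alpha_{jk})(h^\beta_{ikm}h^\beta_{jlm}-h^\beta_{ilm}h^\beta_{jkm}). \]
Exchanging $k\leftrightarrow l$ shows that the four products pair into two equal pieces. Hence
\[ \langle\hat R,\tilde R\rangle=2\sum\bigl(h^\alpha_{ik}h^\alpha_{jl}h^\beta_{ikm}h^\beta_{jlm}-h^\alpha_{ik}h^\alpha_{jl}h^\beta_{ilm}h^\beta_{jkm}\bigr). \]
Next relabel indices to match $-6h^\alpha_{jl}h^\alpha_{km}h^\beta_{ijl}h^\beta_{ikm}+6h^\alpha_{jl}h^\alpha_{km}h^\beta_{ijm}h^\beta_{ikl}$ from (\ref{lapdh2}). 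In the first product rename $(i,j,k,l,m)\to(j,k,l,m,i)$ and use the Codazzi symmetry $h^\beta_{ijl}=h^\beta_{jli}$; this yields $h^\alpha_{jl}h^\alpha_{km}h^\beta_{jli}h^\beta_{kmi}$. The second product treated the same way gives the $h^\beta_{ijm}h^\beta_{ikl}$ term. So those two sums equal $-3\langle\hat R,\tilde R\rangle$.

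The only real risk is sign and ordering bookkeeping in Steps 2 and 3. I would check it by verifying that each rewritten expression is invariant under the same index symmetries as the original, and if necessary by testing on a diagonal $A_\alpha$.
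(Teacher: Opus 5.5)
Your proposal is correct and takes the same route as the paper, which obtains (\ref{lapABR}) simply by rewriting the six quartic sums of (\ref{lapdh2}). Your term-by-term matching checks out: the $k\leftrightarrow l$ pairing expresses $\langle\hat R,\tilde R\rangle$ as twice two sums, and the cyclic relabelling with Codazzi symmetry then matches those sums to the last two terms. The hedge in Step 2 resolves as you expected, since $\operatorname{tr}(A_\beta A_\alpha B_{\alpha,k}B_{\beta,k})=\langle A_\alpha A_\beta,B_{\alpha,k}B_{\beta,k}\rangle$ and $\operatorname{tr}(A_\alpha A_\beta B_{\alpha,k}B_{\beta,k})=\langle A_\alpha A_\beta,B_{\beta,k}B_{\alpha,k}\rangle$, which gives exactly the stated $-6(\cdots)$ bracket.
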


\section{An upper bound for $| \nabla^2 h |$}

In this section, we will derive an estimate for the integral of $| \nabla^2 h
|$. To estimate the terms in the formula (\ref{lapABR}), we first establish some
inequalities for matrices and tensors.

\begin{lemma}
  \label{aibi}Let $\{ a_i \}_{i = 1}^n, \{ b_i \}_{i = 1}^n$ be real numbers.
  We have
  \[ \Big( \sum_i a_i b_i \Big)^2 - \sum_i a_i^2 b_i^2 - 2 \sum_{i
     \geqslant 2} a_1 a_i b_i^2 \leqslant \Big( \sum_i a_i^2 \Big) \Big(
     \frac{1}{2} b_1^2 + \frac{3}{2} \sum_{i \geqslant 2} b_i^2 \Big) . \]
\end{lemma}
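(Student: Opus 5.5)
The plan is to expand the left-hand side, isolate the index $1$, and reduce everything to elementary Cauchy--Schwarz and AM--GM estimates in four scalar quantities. Set
\[ x = \sum_{i \geqslant 2} a_i b_i, \quad A = \sum_{i \geqslant 2} a_i^2, \quad B = \sum_{i \geqslant 2} b_i^2 . \]
Expanding the square, $\big(\sum_i a_i b_i\big)^2 = (a_1 b_1 + x)^2$. Hence the left-hand side equals
\[ a_1^2 b_1^2 + 2 a_1 b_1 x + x^2 - a_1^2 b_1^2 - \sum_{i \geqslant 2} a_i^2 b_i^2 - 2 a_1 \sum_{i \geqslant 2} a_i b_i^2 . \]
The $a_1^2 b_1^2$ terms cancel, so what remains is $2 a_1 b_1 x + x^2 - \sum_{i \geqslant 2} a_i^2 b_i^2 - 2 a_1 \sum_{i \geqslant 2} a_i b_i^2$.

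The first step is to absorb the awkward mixed term $-2 a_1 a_i b_i^2$. Termwise, $-2 a_1 a_i b_i^2 \leqslant a_1^2 b_i^2 + a_i^2 b_i^2$. Summing over $i \geqslant 2$, the $a_i^2 b_i^2$ pieces cancel exactly against $-\sum_{i \geqslant 2} a_i^2 b_i^2$. This leaves
\[ \text{LHS} \leqslant x^2 + 2 a_1 b_1 x + a_1^2 B . \]
By Cauchy--Schwarz, $x^2 \leqslant AB$ and $|x| \leqslant \sqrt{AB}$. Therefore
\[ \text{LHS} \leqslant AB + a_1^2 B + 2 |a_1 b_1| \sqrt{AB} . \]

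The right-hand side of the lemma expands as
\[ \tfrac{1}{2} a_1^2 b_1^2 + \tfrac{3}{2} a_1^2 B + \tfrac{1}{2} A b_1^2 + \tfrac{3}{2} AB . \]
After subtracting $AB + a_1^2 B$, it remains to show
\[ 2 |a_1 b_1| \sqrt{AB} \leqslant \tfrac{1}{2} a_1^2 b_1^2 + \tfrac{1}{2} AB + \tfrac{1}{2} a_1^2 B + \tfrac{1}{2} A b_1^2 . \]
This follows by pairing the four terms via AM--GM:
\[ \tfrac12 a_1^2 b_1^2 + \tfrac12 AB \geqslant |a_1 b_1| \sqrt{AB}, \qquad \tfrac12 a_1^2 B + \tfrac12 A b_1^2 \geqslant |a_1| \sqrt{B}\, |b_1| \sqrt{A} . \]
Adding these two inequalities gives exactly the required bound.

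The only real obstacle is choosing how to distribute the cross term $2 a_1 b_1 x$ between the available budget terms. A naive split such as $2 |a_1 b_1| \sqrt{AB} \leqslant a_1^2 B + A b_1^2$ overshoots the coefficient $\tfrac32$ of $a_1^2 B$, so the symmetric pairing above is essential. No results from earlier in the paper are needed.
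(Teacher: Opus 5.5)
Your proof is correct, and it takes a genuinely different route from the paper's.

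\textbf{The paper's route.} It normalizes $\frac12 b_1^2+\frac32\sum_{i\geqslant 2}b_i^2=1$. It then treats the difference of the two sides as a quadratic form in $(a_1,\dots,a_n)$ with matrix $\left[\begin{smallmatrix}1 & C^T\\ C & I+D-BB^T\end{smallmatrix}\right]$, where $C_i=b_i(b_i-b_1)$ and $D=\mathrm{diag}(b_i^2)$. It inverts the lower block by Sherman--Morrison and shows the Schur complement $1-C^TFC$ is nonnegative, using a Cauchy--Schwarz step and an explicit completion of squares.

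\textbf{Your route.} You collapse everything to the aggregates $x,A,B$. The key move $-2a_1a_ib_i^2\leqslant (a_1^2+a_i^2)b_i^2$ cancels the diagonal terms $-\sum_{i\geqslant 2}a_i^2b_i^2$ exactly. After Cauchy--Schwarz, the remaining budget is precisely $\frac12(a_1^2+A)(b_1^2+B)$, and it must dominate $2|a_1||b_1|\sqrt{A}\sqrt{B}$. That is just the product of $a_1^2+A\geqslant 2|a_1|\sqrt{A}$ and $b_1^2+B\geqslant 2|b_1|\sqrt{B}$, which is equivalent to your pairing.

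\textbf{Comparison.} Your argument is shorter and needs no matrix inversion or normalization. It also treats degenerate configurations uniformly: when $b_i=0$ for all $i\geqslant 2$, the paper's rewriting of $1$ as $\sum_{i\geqslant 2}3b_i^2/(2-b_1^2)$ is not available. It shows where the constants $\frac12$ and $\frac32$ come from, and it yields the slightly stronger intermediate bound $\mathrm{LHS}\leqslant (a_1^2+A)B+2|a_1b_1|\sqrt{AB}$. The paper's approach is more mechanical and works with the exact quadratic form, which in principle gives access to its degenerate directions (equality cases). However, the paper does not use this, and it loses exactness at its own Cauchy--Schwarz step anyway.
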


\begin{proof}
  By homogeneity, we may assume
  \begin{equation}
    \frac{1}{2} b_1^2 + \frac{3}{2} \sum_{i \geqslant 2} b_i^2 = 1.
    \label{bi2}
  \end{equation}

  Set
  \[ A = \left[\begin{array}{c}
       a_2\\
       \vdots\\
       a_n
     \end{array}\right], \quad B = \left[\begin{array}{c}
       b_2\\
       \vdots\\
       b_n
     \end{array}\right], \quad C = \left[\begin{array}{c}
       b_2 (b_2 - b_1)\\
       \vdots\\
       b_n (b_n - b_1)
     \end{array}\right], \]
  \[ D = \left[\begin{array}{ccc}
       b_2^2 &  & \\
       & \ddots & \\
       &  & b_n^2
     \end{array}\right] . \]
  Regarding the difference between the right-hand side and the left-hand side
  of the inequality as a quadratic form of $a_i$, we have  
  \begin{align}
    & \sum_i a_i^2 - \Big( \sum_i a_i b_i \Big)^2 + \sum_i a_i^2 b_i^2 + 2
    \sum_{i = 2}^n a_1 a_i b_i^2 \nonumber\\
    = & \left[\begin{array}{cc}
      a_1 & A^T
    \end{array}\right] \left[\begin{array}{cc}
      1 & C^T\\
      C & I_{n - 1} + D - B B^T
    \end{array}\right] \left[\begin{array}{c}
      a_1\\
      A
    \end{array}\right] . \nonumber
  \end{align}
  
  Denote by $M$ the matrix of the above quadratic form. We need to verify that
  $M$ is positive semidefinite.
  
  Let
  \[ E = (I_{n - 1} + D)^{- 1} = \left[\begin{array}{ccc}
       (1 + b_2^2)^{- 1} &  & \\
       & \ddots & \\
       &  & (1 + b_n^2)^{- 1}
     \end{array}\right] . \]
  From (\ref{bi2}), we get
  \[ 1 - B^T E B = 1 - \sum_{i = 2}^n \frac{b_i^2}{1 + b_i^2} \geqslant 1 -
     \sum_{i = 2}^n b_i^2 > 0. \]
  So, we can use the Sherman--Morrison formula to get the inverse of $I_{n -
  1} + D - B B^T$. Let
  \[ F = (I_{n - 1} + D - B B^T)^{- 1} = E + \frac{E B B^T E}{1 - B^T E B} .
  \]
  Since $E$ is positive definite, $F$ is also positive definite.
  
  Transforming $M$ to a block diagonal matrix, we get
  \[ M = \left[\begin{array}{cc}
       1 & C^T F\\
       0 & I_{n - 1}
     \end{array}\right]  \left[\begin{array}{cc}
       1 - C^T F C & 0\\
       0 & F^{- 1}
     \end{array}\right]  \left[\begin{array}{cc}
       1 & 0\\
       F C & I_{n - 1}
     \end{array}\right], \]
  By the Cauchy--Schwarz inequality we get  
  \begin{align}
    1 - C^T F C = & 1 - C^T E C - \frac{(B^T E C)}{1 - B^T E B} \nonumber\\
    \geqslant & 1 - C^T E C - \frac{(B^T E B) (C^T E C)}{1 - B^T E B}
    \nonumber\\
    = & \frac{1 - B^T E B - C^T E C}{1 - B^T E B} . \nonumber
  \end{align}
  
  Then we have  
  \begin{align}
    & 1 - B^T E B - C^T E C \nonumber\\
    = & \sum_{i = 2}^n \frac{3 b_i^2}{2 - b_1^2} - \sum_{i = 2}^n
    \frac{b_i^2}{1 + b_i^2} - \sum_{i = 2}^n \frac{b_i^2 (b_i - b_1)^2}{1 +
    b_i^2} \nonumber\\
    = & \sum_{i = 2}^n \frac{b_i^2}{(2 - b_1^2) (1 + b_i^2)} [3 (1 + b_i^2) -
    (2 - b_1^2) - (2 - b_1^2) (b_i - b_1)^2] . \nonumber\\
    = & \sum_{i = 2}^n \frac{b_i^2}{(2 - b_1^2) (1 + b_i^2)} \left[ (1 +
    b_1^2) \left( b_i + \frac{b_1  (2 - b_1^2)}{1 + b_1^2} \right)^2 +
    \frac{(1 - 2 b_1^2)^2}{1 + b_1^2} \right] \nonumber\\
    \geqslant & 0. \nonumber
  \end{align}
  
  Therefore, $M$ is positive semidefinite. This proves the inequality.
\end{proof}

\begin{lemma}
  \label{AijBijk}Let $A_{ij}$ be a real $n \times n$ symmetric tensor and
  $B_{ijk}$ be a real $n \times n \times n$ symmetric tensor. Then
  \begin{equation}
    \sum_{i, j, k, l, m}  (A_{i k} A_{j l} - A_{i l} A_{j k})  (B_{ik m} B_{j
    lm} - B_{il m} B_{j km}) \leqslant | A |^2 | B |^2 . \label{AijBijk'}
  \end{equation}
\end{lemma}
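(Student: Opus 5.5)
The plan is to reduce (\ref{AijBijk'}) to Lemma~\ref{aibi} by diagonalizing $A$ and sorting the entries of $B$ by how many of their indices coincide.

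\textbf{Step 1: simplify the left-hand side.} Expanding the product gives four terms. Using the symmetry of $A$ and of $B$, and relabeling $i\leftrightarrow j$, the two cross terms coincide, and so do the two diagonal terms. Writing $B_m$ for the symmetric matrix $(B_{ikm})_{i,k}$, the left-hand side becomes
\[ 2\sum_m \Big[\langle A,B_m\rangle^2-\operatorname{tr}(AB_mAB_m)\Big]. \]
This quantity is invariant under orthogonal changes of frame. So I will choose a frame with $A=\mathrm{diag}(\lambda_1,\dots,\lambda_n)$. Then the left-hand side equals
\[ 2\sum_m\Big[\Big(\sum_i\lambda_iB_{iim}\Big)^2-\sum_{i,k}\lambda_i\lambda_kB_{ikm}^2\Big]. \]

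\textbf{Step 2: sort the terms $\lambda_i\lambda_kB_{ikm}^2$ by index pattern.}
\begin{itemize}
\item The terms with $k=i$ give $\lambda_i^2B_{iim}^2$.
\item The terms with exactly one of $i,k$ equal to $m$ give $2\sum_{i\neq m}\lambda_i\lambda_mB_{imm}^2$. After summing over $m$ and swapping the names $i\leftrightarrow m$, this equals $2\sum_m\sum_{i\ne m}\lambda_m\lambda_iB_{iim}^2$.
\item The remaining terms have $i,k,m$ pairwise distinct.
\end{itemize}
For each fixed $m$, I apply Lemma~\ref{aibi} with the index $m$ playing the role of ``$1$'', with $a_1=\lambda_m$, $b_1=B_{mmm}$, and $a_i=\lambda_i$, $b_i=B_{iim}$ for $i\ne m$. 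This bounds the first two groups, together with the square, by
\[ 2\sum_m|A|^2\Big(\tfrac12B_{mmm}^2+\tfrac32\sum_{i\ne m}B_{iim}^2\Big)=|A|^2\Big(\sum_mB_{mmm}^2+3\sum_{i\ne m}B_{iim}^2\Big). \]
By full symmetry of $B$, this is exactly $|A|^2$ times the part of $|B|^2$ coming from entries with at least two equal indices. An entry $B_{iim}$ with $i\ne m$ has $3$ index permutations, which matches the coefficient $3$.

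\textbf{Step 3: the entries with distinct indices.} Fix an unordered triple $\{i,k,m\}$ of distinct indices, and let $x=\lambda_i$, $y=\lambda_k$, $z=\lambda_m$. The contribution of this triple to the left-hand side is $-4(xy+yz+zx)B_{ikm}^2$. Since $(x+y+z)^2\ge0$, this is at most $2(x^2+y^2+z^2)B_{ikm}^2\le 2|A|^2B_{ikm}^2$. On the other side, the triple contributes $6B_{ikm}^2$ to $|B|^2$, so it is dominated by the available $6|A|^2B_{ikm}^2$. Adding Steps 2 and 3 gives (\ref{AijBijk'}).

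\textbf{Main obstacle.} The delicate point is Step 2. One must check that after summing over $m$ the mixed terms $\lambda_i\lambda_mB_{imm}^2$ really reorganize into the form $-2a_1a_ib_i^2$ required by Lemma~\ref{aibi}. One must also check that the weights $\tfrac12,\tfrac32$ produced by the lemma match the multiplicities $1$ and $3$ of the corresponding entries of $|B|^2$ exactly. This bookkeeping is where the constant $1$ in (\ref{AijBijk'}) is sharp, so I would verify it carefully with a low-dimensional example.
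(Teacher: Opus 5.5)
Your proposal is correct and follows the paper's proof essentially step for step: diagonalize $A$, rewrite the left-hand side as $2\sum_m\bigl[(\sum_i\lambda_iB_{iim})^2-\sum_{i,k}\lambda_i\lambda_kB_{ikm}^2\bigr]$, apply Lemma~\ref{aibi} for each fixed $m$ to the terms with a repeated index, and bound the pairwise-distinct terms using $-2(xy+yz+zx)\leqslant x^2+y^2+z^2\leqslant|A|^2$. Your bookkeeping (the per-triple contribution $-4(xy+yz+zx)B_{ikm}^2$ and the multiplicities $1$, $3$, $6$ in $|B|^2$) checks out and matches the paper's.
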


\begin{proof}
  Since both sides of this inequality are invariant under
  orthogonal transformations, we may assume $A_{ij} = a_i \delta_{ij}$ without
  loss of generality. Then the left-hand side of the inequality equals  
  \begin{align}
    \tmop{LHS} = & 2 \sum_{i, j, m} a_i a_j B_{ii m} B_{j j m} - 2 \sum_{i, j,
    m} a_i a_j B_{ijm}^2 \nonumber\\
    = & 2 \sum_{i, j, m} a_i a_j B_{ii m} B_{j j m} - 2 \sum_{i, m} a_i^2
    B_{ii m}^2 - 4 \sum_i \sum_{m \neq i} a_i a_m B_{imm}^2 \nonumber\\
    & - 4 \sum_{i, j, k \tmop{distinct}} (a_i a_j + a_j a_k + a_k a_i)
    B_{ijk}^2 \nonumber\\
    = & 2 \sum_m \left[ \Big( \sum_i a_i B_{ii m} \Big)^2 - \sum_i a_i^2
    B_{ii m}^2 - 2 \sum_{i \neq m} a_i a_m B_{iim}^2 \right] \nonumber\\
    & - 4 \sum_{i, j, k \tmop{distinct}} (a_i a_j + a_j a_k + a_k a_i)
    B_{ijk}^2 . \nonumber
  \end{align}
  
  For fixed $m$, it follows from Lemma \ref{aibi} that
  \[ \Big( \sum_i a_i B_{ii m} \Big)^2 - \sum_i a_i^2 B_{ii m}^2 - 2
     \sum_{i \neq m} a_i a_m B_{iim}^2 \leqslant | A |^2 \Big( \frac{1}{2}
     B_{m m m}^2 + \frac{3}{2} \sum_{i \neq m} B_{i i m}^2 \Big) . \]
  For distinct $i, j, k$, we have
  \[ - 2 (a_i a_j + a_j a_k + a_k a_i) = a_i^2 + a_j^2 + a_k^2 - (a_i + a_j +
     a_k)^2 \leqslant | A |^2 . \]
  Hence,  
  \begin{align}
    \tmop{LHS} \leqslant & \sum_m | A |^2 \Big( B_{m m m}^2 + 3 \sum_{i \neq
    m} B_{i i m}^2 \Big) + 2 | A |^2 \sum_{i, j, k \tmop{distinct}}
    B_{ijk}^2 \nonumber\\
    \leqslant & | A |^2 \Big( \sum_m B_{m m m}^2 + 3 \sum_m \sum_{i \neq m}
    B_{i i m}^2 + 6 \sum_{i, j, k \tmop{distinct}} B_{ijk}^2 \Big) = | A |^2
    | B |^2 . \nonumber
  \end{align}
  
\end{proof}

\begin{lemma}
  \label{uvxy}Let $u, v, x, y$ be four vectors in an inner product space $V$.
  Then
  \[ \langle u, v \rangle \langle x, y \rangle + \langle u, x \rangle \langle
     v, y \rangle - \langle u, y \rangle \langle v, x \rangle \leqslant \frac{| u
     |^2 | y |^2 + | v |^2 | x |^2}{2} . \]
  The equality holds if and only if $u, v, x, y$ are in a $2$-dimensional
  subspace, $| u | | y | = | v | | x |$, and the oriented angle from $u$ to
  $v$ equals the oriented angle from $x$ to $y$ in the 2-plane.
\end{lemma}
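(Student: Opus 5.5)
The plan is to rewrite the difference between the two sides as a quadratic quantity in an auxiliary tensor, then control it with a spectral estimate. Work in $V\otimes V$, identified with linear maps on $V$ via $a\otimes b \mapsto \langle b,\cdot\rangle a$. Put
\[ T = u\otimes y - v\otimes x, \qquad T^{t} = y\otimes u - x\otimes v, \]
so that $T^t$ is the transpose of $T$. Two direct expansions give
\[ |T|^2 = |u|^2|y|^2 + |v|^2|x|^2 - 2\langle u,v\rangle\langle x,y\rangle , \]
\[ \langle T,T^t\rangle = \langle u,y\rangle^2 + \langle v,x\rangle^2 - 2\langle u,x\rangle\langle v,y\rangle , \]
and also $\operatorname{tr}T = \langle u,y\rangle - \langle v,x\rangle$. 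Substituting these, twice (RHS $-$ LHS) equals
\[ |T|^2 + \langle T,T^t\rangle - (\operatorname{tr} T)^2 = 2|P|^2 - (\operatorname{tr}P)^2, \]
where $P = \tfrac12 (T+T^t)$ is the symmetric part of $T$ and $\operatorname{tr}P = \operatorname{tr}T$. So the lemma reduces to showing $(\operatorname{tr}P)^2 \leqslant 2|P|^2$.

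For this I will use the structure of $P$. Write $P = P_1 - P_2$ with $P_1 = \tfrac12(u\otimes y + y\otimes u)$ and $P_2 = \tfrac12(v\otimes x + x\otimes v)$. Each $P_i$ is a symmetric operator of rank at most $2$. Its nonzero eigenvalues are $\tfrac12(\langle a,b\rangle \pm |a||b|)$, so it has at most one positive and at most one negative eigenvalue. By Weyl's inequalities (subadditivity of the number of positive, resp. negative, eigenvalues under sums), $P$ then has at most two positive eigenvalues $\lambda_1,\lambda_2$ and at most two negative eigenvalues $\mu_1,\mu_2$. Hence $\operatorname{tr}P = (\lambda_1+\lambda_2) + (\mu_1+\mu_2)$ is a sum of two terms of opposite sign, and
\[ (\operatorname{tr}P)^2 \leqslant \max\{(\lambda_1+\lambda_2)^2, (\mu_1+\mu_2)^2\} \leqslant 2\max\{\lambda_1^2+\lambda_2^2,\ \mu_1^2+\mu_2^2\} \leqslant 2|P|^2 . \]
This proves the inequality.

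For the equality case, every step above must be tight. This forces one sign class to be empty, say $\mu_1=\mu_2=0$ (or symmetrically $\lambda_1=\lambda_2=0$), and the remaining two eigenvalues to be equal. So $P = \lambda\,\Pi$ with $\Pi$ the orthogonal projection onto a $2$-plane. I then plan to show that this forces $u,v,x,y$ to lie in that $2$-plane (the degenerate case $P=0$ is handled separately), and to redo the computation in $\mathbb{R}^2\cong\mathbb{C}$. There, writing $u,v,x,y$ as complex numbers, the condition that $P$ is a multiple of the identity should become $u\bar y$-type relations, which translate into $|u||y| = |v||x|$ and equality of the oriented angles from $u$ to $v$ and from $x$ to $y$. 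The converse, that these conditions give equality, is a direct check in $\mathbb{C}$.

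I expect the main obstacle to be this equality analysis: showing that the vectors are forced into a common $2$-plane, and correctly tracking orientations there. The algebraic identity and the eigenvalue-counting step should be routine.
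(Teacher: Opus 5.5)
Your proof of the inequality is correct, and it takes a different route from the paper's. The identity $2(\mathrm{RHS}-\mathrm{LHS})=2|P|^2-(\mathrm{tr}\,P)^2$ checks out. The inertia count (each of $P_1$ and $-P_2$ has at most one positive and one negative eigenvalue, so $P$ has at most two of each) then gives $(\mathrm{tr}\,P)^2\le 2|P|^2$.

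The paper instead writes $\langle u,x\rangle\langle v,y\rangle-\langle u,y\rangle\langle v,x\rangle=\langle u\wedge v,x\wedge y\rangle$ and bounds it by $|u\wedge v|\,|x\wedge y|$. It then combines this with $\langle u,v\rangle\langle x,y\rangle$ into $|u||v||x||y|\cos(\theta-\varphi)$ and finishes with AM--GM. The advantage of that chain is that every step has a transparent equality condition: the bivectors are positively proportional, $\theta=\varphi$, and $|u||y|=|v||x|$. So the equality case comes almost for free. Your route only sees the symmetric tensor $P$, and the vectors must be recovered from it.

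That recovery is where your proposal has a genuine gap. Since the equality characterization is part of the statement, the gap has to be filled.

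You correctly reduce equality to $P=0$ or $P=\lambda\Pi$, with $\Pi$ the projection onto a $2$-plane $W$. But the claim $u,v,x,y\in W$ is only announced, and it is not automatic. The condition $Pz=0$ for $z\perp W$ is a single linear relation among $u,y,v,x$, and it forces nothing when these vectors are linearly dependent. You need the sign structure once more:

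1. Take $\lambda>0$; otherwise swap $(u,y)$ with $(v,x)$. Write $n_+$ for the number of positive eigenvalues. Then $2=n_+(\lambda\Pi)\le n_+(P_1)+n_+(-P_2)\le 2$.
2. Hence $P_1=\alpha_1p_1\otimes p_1-\beta_1m_1\otimes m_1$ and $-P_2=\alpha_2p_2\otimes p_2-\beta_2m_2\otimes m_2$, with $\alpha_i>0$ and $\beta_i\ge 0$. In particular $u,y,v,x\neq 0$, so $\mathrm{span}\{u,y\}=\mathrm{range}\,P_1$ and $\mathrm{span}\{v,x\}=\mathrm{range}\,P_2$.
3. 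On $K=\{p_1,p_2\}^\perp$ you get $0\le\lambda|\Pi z|^2=-\beta_1\langle m_1,z\rangle^2-\beta_2\langle m_2,z\rangle^2\le 0$.
4. Therefore $K\subseteq W^\perp$, and comparing codimensions gives $K=W^\perp$. So $p_1,p_2\in W$, and $m_i\in W$ whenever $\beta_i>0$. Both ranges therefore lie in $W$.
5. For the case $P=0$, i.e.\ $P_1=P_2$, comparing ranges again puts all four vectors in one plane. The subcase $P_1=P_2=0$ is trivial: one of $u,y$ and one of $v,x$ vanish.

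Only after this does your computation in $\mathbb{C}$ apply. There the relevant relation is not of $u\bar y$ type. Since $\langle u,z\rangle\langle y,z\rangle=\tfrac12\mathrm{Re}(uy\bar z^2)+\tfrac12\mathrm{Re}(u\bar y)|z|^2$, $P$ is a multiple of the identity if and only if $uy=vx$ as complex numbers. For nonzero vectors this reads $v/u=y/x$, which is exactly $|u||y|=|v||x|$ together with equal oriented angles.
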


\begin{proof}
  In $\bigwedge^2 (V)$, the inner product of bivectors is given by
  \[ \langle u \wedge v, x \wedge y \rangle = \langle u, x \rangle \langle v,
     y \rangle - \langle u, y \rangle \langle v, x \rangle . \]
  Thus we have  
  \begin{align}
    & \langle u, v \rangle \langle x, y \rangle + \langle u, x \rangle
    \langle v, y \rangle - \langle u, y \rangle \langle v, x \rangle
    \nonumber\\
    = & \langle u, v \rangle \langle x, y \rangle + \langle u \wedge v, x
    \wedge y \rangle \nonumber\\
    \leqslant & \langle u, v \rangle \langle x, y \rangle + | u \wedge v |  |
    x \wedge y | . \nonumber
  \end{align}
  
  Let $\theta \in [0, \pi]$ be the angle between $u$ and $v$, and $\varphi \in
  [0, \pi]$ be the angle between $x$ and $y$. Then  
  \begin{align}
    & \langle u, v \rangle \langle x, y \rangle + | u \wedge v |  | x \wedge
    y | \nonumber\\
    = & | u | | v | | x | | y | \cos \theta \cos \varphi + | u | | v | | x | |
    y | \sin \theta \sin \varphi \nonumber\\
    = & | u | | v | | x | | y | \cos (\theta - \varphi) \nonumber\\
    \leqslant & | u | | v | | x | | y | . \nonumber
  \end{align}
  
  Therefore, we obtain
  \[ \langle u, v \rangle \langle x, y \rangle + \langle u, x \rangle \langle
     v, y \rangle - \langle u, y \rangle \langle v, x \rangle \leqslant | u | | v |
     | x | | y | \leqslant \frac{| u |^2 | y |^2 + | v |^2 | x |^2}{2} . \]
  The equality holds if and only if $u \wedge v, x \wedge y$ are in the same
  direction, $\theta = \varphi$ and $| u | | y | = | v | | x |$.
\end{proof}

Lu {\cite{Lu}} and Ge--Tang {\cite{GT2008}} proved the DDVV inequality.

\begin{lemma}
  Let $A_1, \ldots, A_p$ be $p$ real $n \times n$ symmetric matrices. Then
  \[ \sum_{r, s = 1}^p \|[A_r, A_s]\|^2 \leqslant \left( \sum_{r = 1}^p \|A_r \|^2
     \right)^2 . \]
  The equality holds if and only if, up to an orthonormal action,
  \[ A_1 = \mathrm{diag} (\mu, - \mu, 0, \ldots), \quad A_2 = \mathrm{diag}
     \left( \left(\begin{array}{cc}
       0 & \mu\\
       \mu & 0
     \end{array}\right), 0, \ldots \right), \quad A_3 = \cdots = A_p = 0. \]
\end{lemma}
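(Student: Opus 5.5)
The plan is to follow the structure of the Ge--Tang proof. It reduces the inequality to a statement about a single positive semidefinite matrix, and then to an elementary estimate on pairs of matrices, which is where the real work lies.

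\emph{Step 1: normalize the family.} Both sides are unchanged if $(A_1,\dots,A_p)$ is replaced by $(\sum_s o_{rs}A_s)_r$ with $(o_{rs})\in O(p)$. Indeed, $\sum_{r,s}\|[A_r,A_s]\|^2$ is a contraction of $\sum_r A_r\otimes A_r$, which is $O(p)$-invariant, and $\sum_r\|A_r\|^2$ is the trace of the Gram matrix $(\langle A_r,A_s\rangle)$. Both sides are also invariant under simultaneous conjugation $A_r\mapsto QA_rQ^T$ with $Q\in O(n)$. Using the $O(p)$ action, I will diagonalize the Gram matrix, so that the $A_r$ are mutually orthogonal. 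I will then write $\|A_r\|^2=\lambda_r$, and $S=\sum_r\lambda_r$. The inequality becomes
\[ 2\sum_{r<s}\|[A_r,A_s]\|^2\leqslant S^2, \qquad \langle A_r,A_s\rangle=0 \ (r\neq s). \]

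\emph{Step 2: the pairwise estimate.} For two symmetric matrices $X,Y$ one has the Böttcher--Wenzel-type bound $\|[X,Y]\|^2\leqslant 2\|X\|^2\|Y\|^2$. Applied pairwise, this gives only $2\sum_{r<s}\|[A_r,A_s]\|^2\leqslant 4\sum_{r<s}\lambda_r\lambda_s$. That is too weak when three or more $\lambda_r$ are comparable. So the key step is a sharper, \emph{collective} estimate for mutually orthogonal symmetric matrices. I would try to prove
\[ \sum_{s\neq r}\|[A_r,A_s]\|^2\leqslant c\,\lambda_r\sum_{s\neq r}\lambda_s \]
with the constant $c$ depending on the orthogonality, by diagonalizing $A_r=\operatorname{diag}(a_1,\dots,a_n)$. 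In that frame $\|[A_r,A_s]\|^2=\sum_{i\neq j}(a_i-a_j)^2(A_s)_{ij}^2$, which depends only on the off-diagonal entries of $A_s$. The entries $(A_s)_{ij}$, $i<j$, for different $s$ are constrained by the mutual orthogonality of the $A_s$: the vectors $((A_s)_{ij})_{i<j}$ have norms bounded by $\lambda_s/2$. Since $(a_i-a_j)^2\leqslant 2(a_i^2+a_j^2)$ with equality only when $a_i=-a_j$, I expect $\sum_{i<j}(a_i-a_j)^2 w_{ij}$ to be maximized, under $\sum_i a_i^2=\lambda_r$ and $\sum_{i<j}w_{ij}\leqslant\sum_s\lambda_s/2$, by concentrating on a single pair $(i,j)$ with $a_i=-a_j$. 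This step, turning orthogonality into a usable constraint and summing the resulting bounds over $r$ without losing a factor, is the main obstacle.

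\emph{Step 3: summing and the equality case.} Summing the estimate of Step 2 over $r$ should give $\sum_{r\neq s}\|[A_r,A_s]\|^2\leqslant\sum_{r\neq s}\lambda_r\lambda_s+(\text{terms}\leqslant\sum_r\lambda_r^2)$, hence the bound $S^2$. If the direct pairwise route fails to be sharp, the fallback is the Ge--Tang reformulation. Expand each $A_r$ in an orthonormal basis $\{E_\alpha\}$ of symmetric matrices and set $C=B^TB$, where $B$ is the coefficient matrix; $C$ is positive semidefinite with $\operatorname{tr}C=S$. The inequality then becomes $\mathcal Q(C)\leqslant(\operatorname{tr}C)^2$ for an explicit quadratic form $\mathcal Q$ on positive semidefinite matrices. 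Since $\mathcal Q(C)$ is convex in $C$ (it is a sum of squared norms of linear images), it suffices to check this on rank-one $C$ and on the extreme configurations, which reduces to estimates on $2\times2$ and $3\times3$ blocks.

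Tracing equality back through Steps 2--3 forces $\lambda_r=0$ for $r\geqslant3$ and $\lambda_1=\lambda_2$. It also forces each of $A_1,A_2$ to be supported on a single common $2$-plane, where, after a rotation, $A_1$ is a multiple of $\operatorname{diag}(1,-1)$ and $A_2$ is the same multiple of the off-diagonal matrix, giving exactly the stated normal form.
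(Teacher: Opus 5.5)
The paper does not prove this lemma. It is the DDVV inequality, quoted from Lu and from Ge--Tang, so your outline has to stand as a proof of DDVV on its own. It does not: both of your routes break at the decisive step.

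\emph{The direct route (Steps 2--3).} Any per-matrix bound of the form $\sum_{s\neq r}\|[A_r,A_s]\|^2\leqslant c\,\lambda_r\sum_{s\neq r}\lambda_s$ needs $c\geqslant 2$. The equality configuration already forces this, since there $\|[A_1,A_2]\|^2=8\mu^4=2\lambda_1\lambda_2$. Your heuristic of concentrating the weight on one pair with $a_i=-a_j$ also gives exactly $c=2$. Summing with $c=2$ just reproduces the pairwise bound you rejected. Note also that the only constraint you extract, $\sum_{i<j}(A_s)_{ij}^2\leqslant\lambda_s/2$, does not use the orthogonality at all.

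The sharp per-matrix estimate is Lu's Lemma~\ref{Luineq}:
\[ \sum_{s\neq r}\|[A_r,A_s]\|^2\leqslant\lambda_r\Bigl(\sum_{s\neq r}\lambda_s+\max_{s\neq r}\lambda_s\Bigr). \]
Summed over $r$, it leaves the extra term $\sum_r\lambda_r\max_{s\neq r}\lambda_s$, and this is \emph{not} bounded by $\sum_r\lambda_r^2$: for $\lambda=(3,2,2)$ one gets $18>17$. The triangular arrangement used in Lemma~\ref{3/2Ai22} instead needs $2\sum_r\lambda_r\lambda_{r+1}\leqslant\sum_r\lambda_r^2$, which fails for three equal $\lambda_r$. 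That is why, with exactly these tools, the paper obtains only the Li--Li-type bound with constant $\frac32$ and not DDVV. So your claim in Step 3 that the leftover terms are at most $\sum_r\lambda_r^2$ is unjustified, and it is false for the available estimates.

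\emph{The fallback.} $\mathcal Q$ is not convex. If $C=uu^T$, every $A_r$ is a multiple of $U=\sum_\alpha u_\alpha E_\alpha$, so $\mathcal Q(uu^T)=0$. Polarizing gives $\mathcal Q(uu^T-vv^T)=-2\|[U,V]\|^2<0$ whenever $U,V$ do not commute. Had $\mathcal Q$ been convex, its maximum on $\{C\succeq0,\ \operatorname{tr}C=1\}$ would be attained at rank-one extreme points, where it vanishes; you would then conclude $\mathcal Q\leqslant0$, contradicting the equality configuration.

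What the reformulation actually gives is the following. Diagonalize $C=\sum_\kappa y_\kappa q_\kappa q_\kappa^T$; then
\[ \mathcal Q(C)=\sum_{\kappa,\lambda}y_\kappa y_\lambda\|[\tilde E_\kappa,\tilde E_\lambda]\|^2,\qquad \tilde E_\kappa=\sum_\alpha (q_\kappa)_\alpha E_\alpha , \]
where $\{\tilde E_\kappa\}$ is an \emph{arbitrary} orthonormal basis of symmetric matrices. This is just your Step~1 normal form again, for the orthogonal family $\sqrt{y_\kappa}\,\tilde E_\kappa$, so by itself it makes no progress. The substance of Ge--Tang's proof is a sharp estimate on the numbers $\|[\tilde E_\kappa,\tilde E_\lambda]\|^2$ valid for every orthonormal basis, and your proposal does not supply it.

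Your equality analysis (``tracing equality back'' to $\lambda_r=0$ for $r\geqslant3$ and a common $2$-plane) rests on these missing inequalities, so it is unsupported as well.
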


We next prove the following lemma.

\begin{lemma}
  \label{A2B2+1/2}Let $\{ A_r \}_{r = 1}^p$, $\{ B_r \}_{r = 1}^p$ be real $n
  \times n$ symmetric matrices. Then\\
  (i)
  \[ \sum_{r, s} (\langle A_r A_s, B_r B_s \rangle - \langle A_r A_s, B_s B_r
     \rangle) \leqslant \frac{1}{2} (\sum_r \| A_r \|^2)  (\sum_r \| B_r \|^2)
     ; \]
  (ii)
  \[ \sum_{r, s} (\langle A_r^2, B_s^2 \rangle + \langle A_r A_s, B_r B_s
     \rangle - \langle A_r A_s, B_s B_r \rangle) \leqslant (\sum_r \| A_r
     \|^2)  (\sum_r \| B_r \|^2) ; \]
  (iii)
  \[ \sum_{r, s} (\langle A_r, A_s \rangle \langle B_r, B_s \rangle + \langle
     A_r A_s, B_r B_s \rangle - \langle A_r A_s, B_s B_r \rangle) \leqslant
     (\sum_r \| A_r \|^2)  (\sum_r \| B_r \|^2) . \]
\end{lemma}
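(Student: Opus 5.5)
The plan rests on one algebraic identity for the antisymmetric combination $\langle A_rA_s,B_rB_s\rangle-\langle A_rA_s,B_sB_r\rangle$ that appears in all three parts. Use $\langle X,Y\rangle=\operatorname{tr}(XY^T)$ and the fact that the $A_r,B_r$ are symmetric. Then
\[ \langle A_rA_s,B_rB_s-B_sB_r\rangle=\operatorname{tr}(A_rA_s[B_r,B_s]^T)=-\operatorname{tr}(A_rA_s[B_r,B_s]). \]
Summing over $r,s$ and antisymmetrizing in $(r,s)$ replaces $A_rA_s$ by $\frac12[A_r,A_s]$. Since commutators of symmetric matrices are antisymmetric, this gives
\[ \sum_{r,s}\big(\langle A_rA_s,B_rB_s\rangle-\langle A_rA_s,B_sB_r\rangle\big)=\frac12\sum_{r,s}\langle [A_r,A_s],[B_r,B_s]\rangle . \]

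For (i), apply Cauchy--Schwarz to the right-hand side. This bounds it by $\frac12\big(\sum\|[A_r,A_s]\|^2\big)^{1/2}\big(\sum\|[B_r,B_s]\|^2\big)^{1/2}$. The DDVV inequality applied to each family then gives exactly $\frac12(\sum\|A_r\|^2)(\sum\|B_r\|^2)$.

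For (ii), write everything in components as sums of products of inner products in $\mathbb{R}^p$, and then apply Lemma \ref{uvxy}. Let $a_{ij}\in\mathbb{R}^p$ be the vector with components $(A_r)_{ij}$, and define $b_{ij}$ likewise. Then:
\begin{align*}
\sum_{r,s}\langle A_r^2,B_s^2\rangle &= \sum_{i,j,k,l}\langle a_{ik},a_{kj}\rangle\langle b_{il},b_{lj}\rangle,\\
\sum_{r,s}\langle A_rA_s,B_rB_s\rangle &= \sum_{i,j,k,l}\langle a_{ik},b_{il}\rangle\langle a_{kj},b_{lj}\rangle,\\
\sum_{r,s}\langle A_rA_s,B_sB_r\rangle &= \sum_{i,j,k,l}\langle a_{ik},b_{lj}\rangle\langle a_{kj},b_{il}\rangle.
\end{align*}
So the left side of (ii) is $\sum_{i,j,k,l}\big[\langle u,v\rangle\langle x,y\rangle+\langle u,x\rangle\langle v,y\rangle-\langle u,y\rangle\langle v,x\rangle\big]$ with $u=a_{ik}$, $v=a_{kj}$, $x=b_{il}$, $y=b_{lj}$. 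Lemma \ref{uvxy} bounds each summand by $\frac12(|a_{ik}|^2|b_{lj}|^2+|a_{kj}|^2|b_{il}|^2)$. Summing over $i,j,k,l$, each of the two pieces factors as $(\sum_{i,k}|a_{ik}|^2)(\sum_{j,l}|b_{jl}|^2)=(\sum_r\|A_r\|^2)(\sum_r\|B_r\|^2)$, which gives (ii). The main care here is checking the index bookkeeping in these three identities.

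For (iii), first use invariance: both sides are unchanged under a simultaneous orthogonal change of the index $r$, i.e. $A_r\mapsto\sum_s O_{rs}A_s$, $B_r\mapsto\sum_s O_{rs}B_s$. So I may assume the Gram matrix $\langle A_r,A_s\rangle$ is diagonal. Then $\sum_{r,s}\langle A_r,A_s\rangle\langle B_r,B_s\rangle=\sum_r\|A_r\|^2\|B_r\|^2$. Combined with the identity above, and since the diagonal commutators vanish, (iii) reduces to
\[ \frac12\sum_{r\neq s}\langle[A_r,A_s],[B_r,B_s]\rangle\leqslant\sum_{r\neq s}\|A_r\|^2\|B_s\|^2. \]
The key tool is the two-matrix bound $\|[X,Y]\|^2\leqslant 2\|X\|^2\|Y\|^2$ for symmetric $X,Y$, which follows from DDVV with $p=2$. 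DDVV gives $2\|[X,Y]\|^2\leqslant(\|X\|^2+\|Y\|^2)^2$; replacing $X,Y$ by $tX,Y/t$ and choosing $t^2=\|Y\|/\|X\|$ turns the right side into $4\|X\|^2\|Y\|^2$. With this, each term satisfies
\[ \frac12\langle[A_r,A_s],[B_r,B_s]\rangle\leqslant\|A_r\|\|A_s\|\|B_r\|\|B_s\|\leqslant\frac12\big(\|A_r\|^2\|B_s\|^2+\|A_s\|^2\|B_r\|^2\big). \]
Summing over $r\neq s$ gives the reduced inequality. I expect the main subtlety to be seeing that (iii) should be diagonalized in the normal index rather than handled by Lemma \ref{uvxy} directly; once that reduction is made, the argument closes.
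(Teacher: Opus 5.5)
Your proof is correct. Parts (i) and (ii) coincide with the paper's proof: the commutator identity with Cauchy--Schwarz and DDVV for (i), and entrywise vectors in $\mathbb{R}^p$ with Lemma~\ref{uvxy} for (ii). In (ii), your $a_{kj}, b_{lj}$ are the paper's $\xi_{jk},\eta_{jl}$ by symmetry, and your index bookkeeping checks out.

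Part (iii) is where you take a genuinely different route. The paper does not change the normal index at all. It takes the column vectors $\alpha_{r,i},\beta_{r,j}\in\mathbb{R}^n$ of $A_r,B_r$. It rewrites $\langle A_r,A_s\rangle\langle B_r,B_s\rangle$ as $\sum_{i,j}\langle\alpha_{r,i},\alpha_{s,i}\rangle\langle\beta_{r,j},\beta_{s,j}\rangle$. It rewrites the commutator combination as $\sum_{i,j}(\langle\alpha_{r,i},\beta_{r,j}\rangle\langle\alpha_{s,i},\beta_{s,j}\rangle-\langle\alpha_{r,i},\beta_{s,j}\rangle\langle\alpha_{s,i},\beta_{r,j}\rangle)$. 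It then applies Lemma~\ref{uvxy} again with $u=\alpha_{r,i}$, $v=\alpha_{s,i}$, $x=\beta_{r,j}$, $y=\beta_{s,j}$, which is essentially the transposed vectorization of (ii).

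The paper's route handles (ii) and (iii) with one elementary device and no commutator estimate. The paper also later uses the equality case of Lemma~\ref{uvxy} to pin down $h$ and $\nabla h$ when equality holds in the Laplacian estimate.

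Your route uses the simultaneous $O(p)$-invariance to diagonalize the Gram matrix. After that, the Gram term is exactly $\sum_r\|A_r\|^2\|B_r\|^2$. What remains is a pairwise statement over $r\neq s$, settled by $\|[X,Y]\|^2\le 2\|X\|^2\|Y\|^2$ and AM--GM. This groups (iii) with (i) rather than (ii). It also matches the normalization used later in the paper, namely a normal frame in which $\sigma_{\alpha\beta}$ is diagonal.

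The two-matrix bound does not actually need DDVV. Diagonalize $X$ and use $(\lambda_i-\lambda_j)^2\le 2(\lambda_i^2+\lambda_j^2)\le 2\|X\|^2$.

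The cost of your route is in the equality case. You would have to assemble it from the equality cases of Cauchy--Schwarz, the two-matrix bound and AM--GM, rather than reading it off from Lemma~\ref{uvxy}.
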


\begin{proof}
  Applying the Cauchy--Schwarz inequality and the DDVV inequality, we obtain  
  \begin{align}
    & \sum_{r, s} (\langle A_r A_s, B_r B_s \rangle - \langle A_r A_s, B_s
    B_r \rangle) \nonumber\\
    = & \frac{1}{2} \sum_{r, s}  \langle [A_r, A_s], [B_r, B_s] \rangle
    \nonumber\\
    \leqslant & \frac{1}{2} \sum_{r, s} \| [A_r, A_s] \|  \| [B_r, B_s] \|
    \nonumber\\
    \leqslant & \frac{1}{2} \Big( \sum_{r, s} \| [A_r, A_s] \|^2
    \Big)^{\frac{1}{2}} \Big( \sum_{r, s} \| [B_r, B_s] \|^2
    \Big)^{\frac{1}{2}} \nonumber\\
    \leqslant & \frac{1}{2} \Big( \sum_r \| A_r \|^2 \Big) \Big( \sum_r \|
    B_r \|^2 \Big) . \nonumber
  \end{align}
  
  Thus, we obtain conclusion (i).
  
  Denote by $a^r_{i j}$ and $b^r_{i j}$ the entries of $A_r, B_r$
  respectively. For $1 \leqslant i, j, k, l \leqslant n$, define vectors in
  $\mathbb{R}^p$ as
  \[ \xi_{i j} = (a^1_{i j}, \ldots, a^p_{i j}), \qquad \eta_{i j} = (b^1_{i
     j}, \ldots, b^p_{i j}) . \]
  Then we have  
  \begin{align}
    \sum_{r, s} \langle A_r^2, B_s^2 \rangle = & \sum_{r, s, i, j, k, l}
    a^r_{i k} a^r_{j k} b^s_{i l} b^s_{j l} = \sum_{i, j, k, l} \langle \xi_{i
    k}, \xi_{j k} \rangle \langle \eta_{i l}, \eta_{j l} \rangle \nonumber
  \end{align}
  
  and  
  \begin{align}
    & \sum_{r, s} \langle A_r A_s, B_r B_s \rangle - \langle A_r A_s, B_s B_r
    \rangle \nonumber\\
    = & \sum_{r, s, i, j, k, l} (a^r_{i k} a^s_{j k} b^r_{i l} b^s_{j l} -
    a^r_{i k} a^s_{j k} b^s_{i l} b^r_{j l}) \nonumber\\
    = & \sum_{i, j, k, l} (\langle \xi_{i k}, \eta_{i l} \rangle \langle
    \xi_{j k}, \eta_{j l} \rangle - \langle \xi_{i k}, \eta_{j l} \rangle
    \langle \xi_{j k}, \eta_{i l} \rangle) . \nonumber
  \end{align}
  
  Applying Lemma \ref{uvxy}, we obtain  
  \begin{align}
    & \sum_{r, s} (\langle A_r^2, B_s^2 \rangle + \langle A_r A_s, B_r B_s
    \rangle - \langle A_r A_s, B_s B_r \rangle) \nonumber\\
    \leqslant & \sum_{i, j, k, l} \frac{1}{2} (| \xi_{i k} |^2 | \eta_{j l}
    |^2 + | \xi_{j k} |^2 | \eta_{i l} |^2) \nonumber\\
    = & \Big(\sum_r \| A_r \|^2\Big)  \Big(\sum_r \| B_r \|^2\Big) . \nonumber
  \end{align}
  
  Therefore, we obtain conclusion (ii).
  
  Let
  \[ A_r = \left[\begin{array}{ccc}
       \alpha_{r, 1} & \cdots & \alpha_{r, n}
     \end{array}\right], \quad B_r = \left[\begin{array}{ccc}
       \beta_{r, 1} & \cdots & \beta_{r, n}
     \end{array}\right], \]
  where $\alpha_{r, i}, \beta_{r, i}$ are column vectors. Then we have
  \[ \sum_{r, s} \langle A_r, A_s \rangle \langle B_r, B_s \rangle = \sum_{r,
     s, i, j} \langle \alpha_{r, i}, \alpha_{s, i} \rangle \langle \beta_{r,
     j}, \beta_{s, j} \rangle, \]
  and  
  \begin{align}
    & \sum_{r, s} \langle A_r A_s, B_r B_s \rangle - \langle A_r A_s, B_s B_r
    \rangle \nonumber\\
    = & \sum_{r, s, i, j} \langle \alpha_{r, i}, \beta_{r, j} \rangle \langle
    \alpha_{s, i}, \beta_{s, j} \rangle - \langle \alpha_{r, i}, \beta_{s, j}
    \rangle \langle \alpha_{s, i}, \beta_{r, j} \rangle . \nonumber
  \end{align}

  Using Lemma \ref{uvxy}, we get  
  \begin{align}
    & \sum_{r, s} (\langle A_r, A_s \rangle \langle B_r, B_s \rangle +
    \langle A_r A_s, B_r B_s \rangle - \langle A_r A_s, B_s B_r \rangle)
    \nonumber\\
    \leqslant & \sum_{r, s, i, j} (\langle \alpha_{r, i}, \alpha_{s, i}
    \rangle \langle \beta_{r, j}, \beta_{s, j} \rangle + \langle \alpha_{r,
    i}, \beta_{r, j} \rangle \langle \alpha_{s, i}, \beta_{s, j} \rangle -
    \langle \alpha_{r, i}, \beta_{s, j} \rangle \langle \alpha_{s, i},
    \beta_{r, j} \rangle) \nonumber\\
    \leqslant & \sum_{r, s, i, j} \frac{1}{2} (| \alpha_{r, i} |^2 | \beta_{s,
    j} |^2 + | \alpha_{s, i} |^2 | \beta_{r, j} |^2) \nonumber\\
    = & \sum_{r, s} \| A_r \|^2  \| B_s \|^2 . \nonumber
  \end{align}
  
  Thus we obtain the conclusion (iii).
\end{proof}

Now we can estimate the terms in formula (\ref{lapABR}).

\begin{theorem}
  Let $M$ be an $n$-dimensional minimal submanifold in
  $\mathbb{S}^{n + p}$. Then
  \begin{equation}
    \frac{1}{2} \Delta | \nabla h |^2 \geqslant | \nabla^2 h |^2 + (2 n + 3 -
    8 S) | \nabla h |^2 . \label{LapDh2<gtr>=}
  \end{equation}
  The equality holds if and only if $S = 0$ or there is an orthonormal frame
  such that the components of $h$ and $\nabla h$ satisfy
  \begin{equation}
    \begin{array}{l}
      h^1_{1 1} = - h^1_{2 2} = h^2_{1 2} = h^2_{2 1},\\
      h_{111}^1 = - h_{122}^1 = h_{112}^2 = - h_{222}^2,\\
      h_{112}^1 = - h_{222}^1 = - h_{111}^2 = h_{122}^2,
    \end{array} \label{=cond}
  \end{equation}
  and all other independent components are $0$.
\end{theorem}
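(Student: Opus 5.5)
Starting from formula (\ref{lapABR}), I need to show that the cubic-in-$h$, quadratic-in-$\nabla h$ terms are bounded by $8S|\nabla h|^2$:
\[
3 \langle \hat{R}, \tilde{R} \rangle + \sum_{\alpha, \beta, k}\Big[3 \langle A_{\alpha}^2, B_{\beta, k}^2 \rangle + \langle A_{\alpha}, A_{\beta} \rangle \langle B_{\alpha, k}, B_{\beta, k} \rangle + 6 (\langle A_{\alpha} A_{\beta}, B_{\alpha, k} B_{\beta, k} \rangle - \langle A_{\alpha} A_{\beta}, B_{\beta, k} B_{\alpha, k} \rangle)\Big] \leqslant 8 S |\nabla h|^2 .
\]
For each fixed $k$, write $T_k$ for the commutator-type term $\sum_{\alpha,\beta}(\langle A_\alpha A_\beta, B_{\alpha,k}B_{\beta,k}\rangle - \langle A_\alpha A_\beta, B_{\beta,k}B_{\alpha,k}\rangle)$. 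I split its coefficient $6$ as $3+1+2$ and group the terms as follows.
\begin{itemize}
\item Three copies of $\langle A^2_\alpha,B^2_{\beta,k}\rangle$ together with $3T_k$: by Lemma \ref{A2B2+1/2}(ii), applied with $A_r=A_\alpha$ and $B_r=B_{r,k}$, this group is at most $3S\sum_\alpha|B_{\alpha,k}|^2$.
\item One copy of $\langle A_\alpha,A_\beta\rangle\langle B_{\alpha,k},B_{\beta,k}\rangle$ together with $T_k$: by Lemma \ref{A2B2+1/2}(iii), this group is at most $S\sum_\alpha|B_{\alpha,k}|^2$.
\item The remaining $2T_k$: by Lemma \ref{A2B2+1/2}(i), this is at most $S\sum_\alpha|B_{\alpha,k}|^2$.
\end{itemize}
Summing over $k$ gives a bound of $5S|\nabla h|^2$ for the bracket.

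It remains to show $3\langle\hat R,\tilde R\rangle\leqslant 3S|\nabla h|^2$. Expanding in the normal index, $\langle\hat R,\tilde R\rangle=\sum_{\alpha,\beta}\sum(A^\alpha_{ik}A^\alpha_{jl}-A^\alpha_{il}A^\alpha_{jk})(B^\beta_{ikm}B^\beta_{jlm}-B^\beta_{ilm}B^\beta_{jkm})$, where $B^\beta_{ijk}=h^\beta_{ijk}$ is fully symmetric by Codazzi. Applying Lemma \ref{AijBijk} to each pair $(\alpha,\beta)$ bounds the summand by $|A_\alpha|^2|\nabla h^\beta|^2$. Summing over $\alpha,\beta$ gives $S|\nabla h|^2$. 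Adding up, the total is at most $8S|\nabla h|^2$, which yields (\ref{LapDh2<gtr>=}).

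For the equality case, I need every inequality above to be sharp at once. The DDVV equality case, used in Lemma \ref{A2B2+1/2}(i), forces the following when $S\neq0$: in a suitable frame, $A_1=\mathrm{diag}(\mu,-\mu,0,\dots)$, $A_2$ is the off-diagonal block $\mu$ in the $(1,2)$ entries, and all other $A_\alpha$ vanish. Equality in DDVV for the $B_{\cdot,k}$ and in the Cauchy--Schwarz step forces $[B_{\alpha,k},B_{\beta,k}]$ to be proportional to $[A_\alpha,A_\beta]$. Equality in Lemma \ref{uvxy}, which requires coplanarity, matching norms and equal oriented angles, together with the equality conditions in Lemma \ref{AijBijk} (coming from Lemma \ref{aibi} and the triple-index estimate), should then force the following: $B_{\alpha,k}$ is supported in the $\{e_1,e_2\}$ block, $h^\alpha_{ijk}=0$ whenever some index exceeds $2$ or $\alpha>2$, and the relations (\ref{=cond}) hold, using full symmetry and tracelessness of $h^\alpha_{ijk}$ in $(i,j)$. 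Conversely, I would check directly that (\ref{=cond}) gives equality in each step.

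I expect the main obstacle to be this equality analysis, in particular extracting (\ref{=cond}) from the equality conditions of Lemma \ref{uvxy} and Lemma \ref{AijBijk}. Lemma \ref{aibi}'s proof has no explicit equality characterization, so I would derive one from the kernel of the matrix $M$ in the case $b_1^2=1/2$ with the $b_i$ for $i\geqslant 2$ matched as in the final displayed identity. The inequality part itself is just bookkeeping with the three lemmas.
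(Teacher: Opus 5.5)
Your proof of the inequality is the paper's proof. You use the same split $6=3+1+2$ of the commutator term against Lemma \ref{A2B2+1/2}(ii), (iii), (i), giving $5S|\nabla h|^2$, and the same pairwise application of Lemma \ref{AijBijk} to $(A_\alpha, h^\beta_{ijk})$, giving $3\langle\hat R,\tilde R\rangle\le 3S|\nabla h|^2$. Your equality strategy also follows the paper's route, but it contains a step that fails, and the paper's proof has the same oversight.

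\textbf{The failing step.} You claim that equality forces the DDVV normal form of the $A_\alpha$ whenever $S\neq 0$. This is not true in general.
\begin{itemize}
\item Every term you estimate is quadratic in $\nabla h$. At a point with $\nabla h=0$ they all vanish, so (\ref{LapDh2<gtr>=}) is an equality for arbitrary $h$.
\item The Cauchy--Schwarz/DDVV chain of Lemma \ref{A2B2+1/2}(i) gives DDVV equality for $\{A_\alpha\}$ only for those $k$ with $B_{\cdot,k}\neq 0$.
\item In this degenerate case (\ref{=cond}) really can fail. A Clifford minimal hypersurface has $\nabla h\equiv 0$ and $S=n$. But (\ref{=cond}) with $S\neq 0$ makes $(\sigma_{\alpha\beta})$ have two equal nonzero eigenvalues $S/2$. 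This is impossible for $p=1$, and it stays false after a totally geodesic inclusion into a larger sphere.
\end{itemize}
So the ``only if'' as stated is false and cannot be proved. The correct list of alternatives is: $S=0$, or $\nabla h=0$, or (\ref{=cond}). The equality clause of Theorem \ref{intddh2} needs the same amendment.

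\textbf{The non-degenerate case.} When $S\neq0$ and $\nabla h\neq0$, your outline does close, and you do not need a general equality case of Lemma \ref{aibi}. Once $A_1=\mu\,\mathrm{diag}(1,-1,0,\dots)$ is fixed, you can read off Lemma \ref{AijBijk}'s equality directly:
\begin{itemize}
\item The distinct-index estimate is strict ($2|A|^2$ versus $6|A|^2$), so $h^\beta_{ijk}=0$ for distinct indices.
\item For $m\ge 3$, the bracket is $-2\mu^2B_{11m}B_{22m}$ against $\mu^2B_{mmm}^2+3\mu^2\sum_{i\neq m}B_{iim}^2$.
\item For $m=1$, the defect is $\mu^2(B_{111}+B_{122})^2+3\mu^2\sum_{i\ge 3}B_{ii1}^2$, and similarly for $m=2$.
\end{itemize}
Together these kill every $h^\beta_{ijk}$ having an index $>2$. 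Next, Cauchy--Schwarz equality gives $[B_{\alpha,k},B_{\beta,k}]=c_k[A_\alpha,A_\beta]$ with $c_k>0$. Combine this with DDVV equality for $\{B_{\alpha,k}\}_\alpha$ inside the two-dimensional space of traceless symmetric $2\times 2$ matrices. This kills $B_{\alpha,k}$ for $\alpha>2$. It also forces $B_{2,1}$ to be obtained from $B_{1,1}$ by the same rotation of that plane that takes $A_1$ to $A_2$. Codazzi then gives (\ref{=cond}).
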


\begin{proof}
  Applying Lemma \ref{AijBijk}, we obtain
  \[ \langle \hat{R}, \tilde{R} \rangle = \sum_{\alpha, \beta, i, j, k, l, m}
     (h_{i k}^{\alpha} h_{j l}^{\alpha} - h_{i l}^{\alpha} h_{j k}^{\alpha})
     (h_{i k m}^{\beta} h_{j l m}^{\beta} - h_{i l m}^{\beta} h_{j k
     m}^{\beta}) \leqslant S | \nabla h |^2 . \]
  From Lemma \ref{A2B2+1/2}, we have
  \[ \sum_{k, \alpha, \beta} (\langle A_{\alpha} A_{\beta}, B_{\alpha, k}
     B_{\beta, k} \rangle - \langle A_{\alpha} A_{\beta}, B_{\beta, k}
     B_{\alpha, k} \rangle) \leqslant \frac{1}{2} S | \nabla h |^2, \]
  \[ \sum_k \sum_{\alpha, \beta} (\langle A_{\alpha}^2, B_{\beta, k}^2 \rangle
     + \langle A_{\alpha} A_{\beta}, B_{\alpha, k} B_{\beta, k} \rangle -
     \langle A_{\alpha} A_{\beta}, B_{\beta, k} B_{\alpha, k} \rangle)
     \leqslant S | \nabla h |^2 \]
  and
  \[ \sum_k \sum_{\alpha, \beta} (\langle A_{\alpha}, A_{\beta} \rangle
     \langle B_{\alpha, k}, B_{\beta, k} \rangle + \langle A_{\alpha}
     A_{\beta}, B_{\alpha, k} B_{\beta, k} \rangle - \langle A_{\alpha}
     A_{\beta}, B_{\beta, k} B_{\alpha, k} \rangle) \leqslant S | \nabla h |^2
     . \]
  Substituting these inequalities into formula (\ref{lapABR}), we obtain
  \[ \frac{1}{2} \Delta | \nabla h |^2 \geqslant | \nabla^2 h |^2 + (2 n + 3 -
     8 S) | \nabla h |^2 . \]

  When the equality in (\ref{LapDh2<gtr>=}) holds, by the equality condition
  of the DDVV inequality, we can choose an orthonormal frame such that
  \[ h^1_{1 1} = - h^1_{2 2} = h^2_{1 2} = h^2_{2 1}, \]
  and all other independent components of $h$ vanish. If $h^{\alpha}_{i j}$ are not
  all zero, from the equality condition in Lemma \ref{uvxy}, we get
  $h^{\alpha}_{i j k} = 0$ if $\alpha > 2$ or $i > 2$ and
  \[ \begin{array}{l}
       h_{111}^1 = - h_{122}^1 = h_{112}^2 = - h_{222}^2,\\
       h_{112}^1 = - h_{222}^1 = - h_{111}^2 = h_{122}^2 .
     \end{array} \]

  Conversely, these conditions imply equality in (\ref{LapDh2<gtr>=}).
  Hence the coefficient 8 is sharp.
\end{proof}

Now we obtain the following integral inequality for $| \nabla^2 h |^2$.

\begin{theorem}\label{intddh2}
  \label{intLapDh2}Let $M$ be an $n$-dimensional compact minimal submanifold
  in $\mathbb{S}^{n + p}$. Then
  \[ \int_M | \nabla^2 h |^2 \leqslant \int_M (8 S - 2 n - 3) | \nabla h |^2 .
  \]
  The equality holds if and only if condition $(\ref{=cond})$ holds
  everywhere on $M$.
\end{theorem}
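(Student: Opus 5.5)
The inequality follows by integrating the pointwise estimate (\ref{LapDh2<gtr>=}) over $M$. Since $M$ is compact without boundary, the divergence theorem gives $\int_M \Delta |\nabla h|^2 = 0$. Integrating
\[ \frac{1}{2}\Delta|\nabla h|^2 \geqslant |\nabla^2 h|^2 + (2n+3-8S)|\nabla h|^2 \]
therefore yields
\[ 0 \geqslant \int_M |\nabla^2 h|^2 + \int_M (2n+3-8S)|\nabla h|^2 , \]
which rearranges to the claimed inequality $\int_M |\nabla^2 h|^2 \leqslant \int_M (8S-2n-3)|\nabla h|^2$.

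For the equality case, set $f = \frac12\Delta|\nabla h|^2 - |\nabla^2 h|^2 - (2n+3-8S)|\nabla h|^2$. By the preceding theorem $f \geqslant 0$ pointwise, and $f$ is continuous since $h$ is smooth. Equality in the integral inequality means $\int_M f = 0$, and a nonnegative continuous function with zero integral vanishes identically. So equality holds in (\ref{LapDh2<gtr>=}) at every point. The pointwise characterization from the preceding theorem then says that at each point either $S=0$ or there is a frame in which (\ref{=cond}) holds, which is exactly the stated condition. Conversely, if this holds at every point, then $f\equiv 0$ by the converse part of the same theorem, and integrating gives equality.

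There is no real obstacle here; all the work was done in the pointwise estimate. The only thing to watch is the reading of the equality condition. The alternative ``$S=0$'' at some points must be kept alongside (\ref{=cond}), since the frame condition is pointwise and may vary from point to point. Both directions of the pointwise equivalence are quoted directly from the preceding theorem.
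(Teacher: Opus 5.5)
Your integration argument for the inequality is exactly the paper's route: the paper states this theorem as an immediate consequence of (\ref{LapDh2<gtr>=}). Your reduction of the equality case to pointwise equality (a continuous $f\geqslant 0$ with $\int_M f=0$ vanishes identically) is also correct, and you were right that points with $S=0$ must be allowed.

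The gap is the final step, where you quote the pointwise equality characterization. That characterization is incomplete, so the ``only if'' direction you conclude is false. Every term subtracted in (\ref{lapABR}) is quadratic in the $B_{\alpha,k}$: this covers $\langle\hat R,\tilde R\rangle$ and the three sums. Hence at any point where $\nabla h=0$, both sides of (\ref{LapDh2<gtr>=}) equal $|\nabla^2 h|^2$, with no constraint on $h$. In the proof of the pointwise theorem, DDVV equality for $\{A_\alpha\}$ is forced only when $S\neq 0$ and $B_{\cdot,k}\neq 0$ for some $k$. When $B_{\cdot,k}=0$, the last step in Lemma \ref{A2B2+1/2}(i) is trivially an equality ($0\leqslant 0$).

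A concrete counterexample is the Clifford torus $S^1(1/\sqrt{2})\times S^1(1/\sqrt{2})$ in a totally geodesic $\mathbb{S}^3\subset\mathbb{S}^4$, or any Clifford minimal hypersurface $S^k(\sqrt{k/n})\times S^{n-k}(\sqrt{(n-k)/n})$. There $\nabla h\equiv 0$, hence $\nabla^2 h\equiv 0$, and both integrals vanish, so equality holds. But $S\equiv n\neq 0$. Moreover, the shape operators are all multiples of a single matrix in every normal frame, so they commute. Thus $\sum_{\alpha,\beta}|[A_\alpha,A_\beta]|^2=0<S^2$, and no frame realizes (\ref{=cond}). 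So neither the literal statement nor your reading (``at each point $S=0$ or (\ref{=cond})'') is true. More generally, every compact minimal submanifold with parallel second fundamental form attains equality.

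What your argument actually proves, once the pointwise theorem is corrected, is this: equality holds if and only if at each point $S=0$, or $\nabla h=0$, or (\ref{=cond}) holds in some frame. This assumes the paper's analysis of the case $S\neq 0$, $\nabla h\neq 0$ is right. The ``if'' direction you wrote is fine, since points with $S=0$ or $\nabla h=0$ give pointwise equality trivially. It is the ``only if'' that fails. The point is that you should have checked the pointwise equality case rather than quoting it: the degenerate alternative $\nabla h=0$ is precisely what the integral argument cannot rule out.
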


\section{Estimates for the eigenvalues of $\sigma_{\alpha \beta}$}

In {\cite{Lu}}, Lu proved the following inequalities.

\begin{lemma}
  \label{maxrij}Suppose $\lambda_1, \ldots, \lambda_n$ are real numbers and
  \[ \lambda_1 + \cdots + \lambda_n = 0. \]
  Let $r_{ij}$ be nonnegative numbers for $i < j$. Then we have
  \[ \sum_{i < j} (\lambda_i - \lambda_j)^2 r_{ij} \leqslant \Big( \sum_i
     \lambda_i^2 \Big) \Big( \sum_{i < j} r_{ij} + \max_{i < j} \{r_{ij} \}
     \Big) . \]
\end{lemma}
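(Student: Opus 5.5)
The plan is to use that both sides are linear in the weights $r_{ij}$, reduce to $0/1$ weights by a layer-cake decomposition, and then bound the resulting graph quantity by a spectral estimate for the graph Laplacian.

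\emph{Step 1 (reduction to $0/1$ weights).} List the distinct positive values of $\{r_{ij}\}$ as $t_1>t_2>\cdots>t_q>0$ and set $t_{q+1}=0$. Let $E_k=\{(i,j): i<j,\ r_{ij}\geqslant t_k\}$, so that $E_1\subset E_2\subset\cdots\subset E_q$. Put $c_k=t_k-t_{k+1}\geqslant 0$. Then
\[ r_{ij}=\sum_k c_k\,\mathbf 1_{E_k}(i,j), \qquad \sum_{i<j}r_{ij}=\sum_k c_k|E_k|, \qquad \max_{i<j} r_{ij}=t_1=\sum_k c_k. \]
Substituting this, the left-hand side becomes $\sum_k c_k\sum_{(i,j)\in E_k}(\lambda_i-\lambda_j)^2$, and the right-hand side becomes $\sum_k c_k(|E_k|+1)\sum_i\lambda_i^2$. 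Since $c_k\geqslant 0$, it suffices to prove the following for every set $E$ of pairs:
\[ \sum_{(i,j)\in E}(\lambda_i-\lambda_j)^2\leqslant (|E|+1)\sum_i\lambda_i^2 . \]

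\emph{Step 2 (spectral bound).} Regard $E$ as the edge set of a graph on $\{1,\dots,n\}$. Then $\sum_{E}(\lambda_i-\lambda_j)^2=\lambda^TL\lambda$, where $L$ is the graph Laplacian. So it is enough to show that the largest eigenvalue of $L$ is at most $|E|+1$.
\begin{itemize}
\item Write $L=NN^T$, where $N$ is the $n\times|E|$ oriented incidence matrix.
\item The nonzero eigenvalues of $L$ coincide with those of $N^TN$.
\item $N^TN$ is indexed by edges: its diagonal entries are $2$, and its off-diagonal entries are $\pm1$ exactly when two edges share a vertex.
\item By Gershgorin, every eigenvalue is bounded by the largest absolute row sum. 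For the row of an edge $ij$, that sum is $2+(d_i-1)+(d_j-1)=d_i+d_j$.
\item Since the edges at $i$ and the edges at $j$ share only the edge $ij$ itself, $d_i+d_j\leqslant |E|+1$.
\end{itemize}
Hence $\lambda^TL\lambda\leqslant(|E|+1)|\lambda|^2$. This is the Anderson--Morley bound.

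\emph{Remark and main obstacle.} In this argument the hypothesis $\sum_i\lambda_i=0$ is never needed. When $|E|\geqslant n-1$ one could instead use the identity $\sum_{i<j}(\lambda_i-\lambda_j)^2=n\sum_i\lambda_i^2$, which does rely on it. The step requiring care is Step 2: the naive bound $(\lambda_i-\lambda_j)^2\leqslant 2\lambda_i^2+2\lambda_j^2$ only yields the factor $2\max_i d_i$, which fails for star graphs. The Laplacian eigenvalue bound via $N^TN$ is what gives the sharp factor $|E|+1$. The layer-cake step is routine, but I will check that the identity $\max r_{ij}=\sum_k c_k$ is exactly what produces the extra ``$+1$'' term for each level set.
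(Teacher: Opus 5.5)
Your proof is correct and complete. The paper itself does not prove this lemma: it quotes it from Lu \cite{Lu} and uses it as a black box, for instance in the proof of Lemma \ref{ddh2<gtr>} with $r_{ij}=(h^2_{ij})^2$. So there is no in-paper argument to compare against, and what you have is a self-contained substitute for the citation.

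Both steps check out.
\begin{itemize}
\item The layer-cake reduction is exact. A pair with $r_{ij}=t_m$ receives $\sum_{k\geqslant m}c_k=t_m$, and $\sum_k c_k=t_1$ telescopes to the maximum. This reduces everything to $\lambda^T L_E\lambda\leqslant(|E|+1)|\lambda|^2$ for a simple graph.
\item In the simple-graph case, the Gershgorin bound on $N^TN$ (the Anderson--Morley bound) gives $\mu_{\max}(L_E)\leqslant\max_{ij\in E}(d_i+d_j)$. Combined with $d_i+d_j\leqslant|E|+1$, this finishes the proof.
\end{itemize}

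An equivalent one-line reduction is also available. Write the left side minus $|\lambda|^2\sum r_{ij}$ as $\sum_{i<j}r_{ij}((\lambda_i-\lambda_j)^2-|\lambda|^2)$. Bound it by $\max r_{ij}$ times the sum of the positive parts. That sum is the $0/1$ case for $E=\{(i,j):(\lambda_i-\lambda_j)^2>|\lambda|^2\}$.

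You are right that $\sum_i\lambda_i=0$ is never used, and this is structural. The left side is invariant under $\lambda\mapsto\lambda+c\mathbf 1$, while $\sum_i\lambda_i^2$ is smallest at the mean-zero translate. Hence the zero-sum case is the extremal one and implies the general case.

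Your argument also shows the ``$+\max r_{ij}$'' term is sharp even under the zero-sum hypothesis. Take a star with centre $1$ and leaves $2,\dots,k+1$, weights $r_{1j}=1$, and $\lambda=(k,-1,\dots,-1,0,\dots,0)$. Then both sides equal $k(k+1)^2$. This is exactly the situation in which your observation that the naive bound $(\lambda_i-\lambda_j)^2\leqslant 2\lambda_i^2+2\lambda_j^2$ loses a factor becomes decisive.
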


\begin{lemma}
  \label{Luineq}Let $A_1, A_2, \ldots, A_p$ be symmetric $n \times n$ matrices
  which satisfy
  \[ \langle A_i, A_j \rangle = 0 \quad \tmop{if} \enspace 2 \leqslant i < j,
     \qquad \| A_2 \| \geqslant \cdots \geqslant \| A_p \| . \]
  Then
  \[ \sum_{j = 2}^p \| [A_1, A_j] \|^2 \leqslant \| A_1 \|^2 \Big( \sum_{j =
     2}^p \| A_j \|^2 + \| A_2 \|^2 \Big) . \]
\end{lemma}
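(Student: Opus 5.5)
The plan is to diagonalize $A_1$, write each commutator norm entrywise, and apply Lemma \ref{maxrij} with weights $r_{ik}$ built from the off-diagonal entries of $A_2,\ldots,A_p$. The term $\sum r_{ik}$ will give $\sum_j\|A_j\|^2$, and a Bessel-inequality argument will bound $\max r_{ik}$ by $\|A_2\|^2$.

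\emph{Reduction to $\operatorname{tr} A_1 = 0$.} Replacing $A_1$ by $A_1' = A_1 - \frac{\operatorname{tr}A_1}{n} I$ does not change any commutator $[A_1, A_j]$. It also gives $\|A_1'\|^2 = \|A_1\|^2 - \frac{(\operatorname{tr}A_1)^2}{n} \leqslant \|A_1\|^2$. So it suffices to prove the inequality for traceless $A_1$. Both sides are invariant under simultaneous orthogonal conjugation of all $A_r$, and conjugation preserves the hypotheses. Hence we may assume $A_1 = \mathrm{diag}(\lambda_1, \ldots, \lambda_n)$ with $\sum_i \lambda_i = 0$.

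\emph{Entrywise form of the commutators.} Write $A_j = (a^j_{ik})$. Then $[A_1, A_j]_{ik} = (\lambda_i - \lambda_k) a^j_{ik}$, so
\[ \sum_{j = 2}^p \| [A_1, A_j] \|^2 = \sum_{i < k} (\lambda_i - \lambda_k)^2 r_{ik}, \qquad r_{ik} := 2 \sum_{j = 2}^p (a^j_{ik})^2 \geqslant 0. \]
Lemma \ref{maxrij} then bounds this by $\|A_1\|^2 \big( \sum_{i<k} r_{ik} + \max_{i<k} r_{ik} \big)$. For the first term,
\[ \sum_{i<k} r_{ik} = \sum_{j\geqslant 2} \sum_{i \neq k} (a^j_{ik})^2 \leqslant \sum_{j \geqslant 2} \|A_j\|^2. \]

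\emph{The key step: $\max r_{ik} \leqslant \|A_2\|^2$.} This is where the orthogonality and monotonicity hypotheses enter. Fix $i<k$ and let $E = \frac{1}{\sqrt 2}(e_i e_k^T + e_k e_i^T)$, which is a unit symmetric matrix. Then $\langle A_j, E \rangle = \sqrt{2}\, a^j_{ik}$, so $r_{ik} = \sum_{j \geqslant 2} \langle A_j, E\rangle^2$.

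Discard any zero $A_j$ and set $U_j = A_j / \|A_j\|$. The $U_j$ form an orthonormal family, because $\langle A_i, A_j\rangle = 0$ for $2 \leqslant i<j$. Using $\|A_j\| \leqslant \|A_2\|$ and then Bessel's inequality,
\[ r_{ik} = \sum_j \|A_j\|^2 \langle U_j, E\rangle^2 \leqslant \|A_2\|^2 \sum_j \langle U_j, E \rangle^2 \leqslant \|A_2\|^2 \|E\|^2 = \|A_2\|^2. \]

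Combining the three steps gives the stated inequality. I expect no serious obstacle: the only genuine idea is expressing $r_{ik}$ as squared projections of $E$ onto an orthogonal family, and the remaining steps are bookkeeping.
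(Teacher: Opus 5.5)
Your proof is correct and complete given Lemma~\ref{maxrij}. The paper does not prove this lemma itself; it quotes it from Lu and places Lemma~\ref{maxrij} directly before it, which points to your route: diagonalize $A_1$, apply Lemma~\ref{maxrij} with $r_{ik}=2\sum_{j\geqslant 2}(a^j_{ik})^2$, and bound $\max r_{ik}\leqslant\|A_2\|^2$ by Bessel's inequality for the orthonormal family $A_j/\|A_j\|$. Your preliminary removal of the trace of $A_1$ is a correct and worthwhile detail, because the lemma as stated does not assume $\tmop{tr}A_1=0$, whereas Lemma~\ref{maxrij} is stated with the hypothesis $\sum_i\lambda_i=0$.
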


\

Using these inequalities, Lu {\cite{Lu}} gave a new proof of Li--Li's matrix
inequality \cite{LL1992}. In the following, we use Lemma \ref{Luineq} to obtain a refined
Li--Li matrix inequality.

\begin{lemma}
  \label{3/2Ai22}Let $A_1, A_2, \ldots, A_p$ be symmetric matrices which
  satisfy
  \[ \langle A_i, A_j \rangle = 0 \quad \tmop{if} \enspace 2 \leqslant i < j,
     \qquad \| A_2 \| \geqslant \cdots \geqslant \| A_p \| . \]
  Then
  \[ \sum_{i, j} \| [A_i, A_j] \|^2 + \sum_i \| A_i \|^4 \leqslant
     \frac{3}{2} \Big( \sum_i \| A_i \|^2 \Big)^2 - \frac{1}{2} \Big(
     \sum_i \| A_i \|^2 - 2 \| A_2 \|^2 \Big)^2 . \]
\end{lemma}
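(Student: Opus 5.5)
We prove the statement by induction on $p$. The idea is to peel off $A_1$ using Lemma \ref{Luineq} and to apply the inductive hypothesis to the remaining family $A_2,\dots,A_p$, with $A_2$ playing the role of the unconstrained first matrix. The obvious alternatives for bounding the commutators among $A_2,\dots,A_p$ all lose too much: the crude bound $\|[A_j,A_k]\|^2\le 2\|A_j\|^2\|A_k\|^2$, the DDVV inequality, and the Li--Li inequality each leave a remainder of the wrong sign. Only the recursive use of the sharper bound closes the gap.

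\emph{Notation.} Write $a_i=\|A_i\|^2$, $S=\sum_i a_i$ and $T=S-a_1=\sum_{j\ge 2}a_j$. The right-hand side of the statement simplifies to
\[ \tfrac32 S^2-\tfrac12(S-2a_2)^2 = S^2+2Sa_2-2a_2^2. \]
Set $a_j=0$ for $j>p$.

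\emph{Base case and first step.} For $p=1$ the claim reads $a_1^2\le a_1^2$, which holds. For the inductive step, split the left-hand side into three parts:
\begin{itemize}
\item the ordered pairs containing the index $1$, which contribute $2\sum_{j\ge2}\|[A_1,A_j]\|^2$;
\item the term $a_1^2$;
\item the quantity $X:=\sum_{j,k\ge2}\|[A_j,A_k]\|^2+\sum_{j\ge2}a_j^2$.
\end{itemize}
By Lemma \ref{Luineq}, whose hypotheses are exactly ours, the first part is at most $2a_1(T+a_2)$.

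\emph{Inductive step.} The family $A_2,\dots,A_p$ satisfies the hypotheses of the lemma with $p-1$ matrices. Here $A_2$ plays the role of the first matrix, on which no orthogonality is imposed, and $A_3,\dots,A_p$ are mutually orthogonal with nonincreasing norms. The inductive hypothesis therefore gives
\[ X\le T^2+2Ta_3-2a_3^2. \]
We then need $Ta_3-a_3^2\le Ta_2-a_2^2$. This is equivalent to $(a_2-a_3)(T-a_2-a_3)\ge0$, which holds because $a_2\ge a_3$ and $T\ge a_2+a_3$. Hence $X\le T^2+2Ta_2-2a_2^2$.

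\emph{Conclusion.} Summing the three parts,
\[ \mathrm{LHS}\le 2a_1T+2a_1a_2+a_1^2+T^2+2Ta_2-2a_2^2 = (a_1+T)^2+2(a_1+T)a_2-2a_2^2, \]
which is exactly $S^2+2Sa_2-2a_2^2$, the desired right-hand side.

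The main point to watch is that the induction is legitimate. The statement must be applied to the shifted family with its own "second" matrix $A_3$, so it must hold with no orthogonality assumption on the first matrix. This is precisely the form in which the lemma is stated. The small cases $p=2$ (where $X=a_2^2$ and $a_3=0$) and $p=3$ also need checking, and the convention $a_j=0$ for $j>p$ covers them.
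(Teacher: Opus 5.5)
Your proof is correct and is essentially the paper's argument. The paper applies Lemma \ref{Luineq} to every tail family $A_i,\dots,A_p$ at once, which gives $\sum_{i<j}\|[A_i,A_j]\|^2\le\sum_{i<j}a_ia_j+\sum_{i=1}^{p-1}a_ia_{i+1}$, and then bounds the chain sum by $a_2(S-a_2)$ in one step; your induction makes the same tail applications recursively, with the monotonicity step $(a_2-a_3)(T-a_2-a_3)\ge 0$ doing the work of that chain estimate one level at a time.
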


\begin{proof}
  Using Lemma \ref{Luineq}, we obtain  
  \begin{align}
    \sum_{i < j} \| [A_i, A_j] \|^2 = & \sum_{i = 1}^{p - 1} \sum_{j = i +
    1}^p \| [A_i, A_j] \|^2 \nonumber\\
    \leqslant & \sum_{i = 1}^{p - 1} \| A_i \|^2 \Big( \sum_{j = i + 1}^p \|
    A_j \|^2 + \| A_{i + 1} \|^2 \Big)  \label{AiAj2}\\
    = & \sum_{i < j} \| A_i \|^2 \| A_j \|^2 + \sum_{i = 1}^{p - 1} \| A_i
    \|^2 \| A_{i + 1} \|^2 \nonumber\\
    = & \frac{1}{2} \Big( \sum_i \| A_i \|^2 \Big)^2 - \frac{1}{2} \sum_i
    \| A_i \|^4 + \sum_{i = 1}^{p - 1} \| A_i \|^2 \| A_{i + 1} \|^2 .
    \nonumber
  \end{align}
  
  Note that  
  \begin{align}
    & \sum_{i = 1}^{p - 1} \| A_i \|^2 \| A_{i + 1} \|^2 \nonumber\\
    \leqslant & \| A_1 \|^2 \| A_2 \|^2 + \sum_{i = 3}^p \| A_2 \|^2 \| A_i
    \|^2  \label{A1A2A3}\\
    = & \frac{1}{4} \Big( \sum_i \| A_i \|^2 \Big)^2 - \frac{1}{4} \Big(
    \sum_i \| A_i \|^2 - 2 \| A_2 \|^2 \Big)^2 . \nonumber
  \end{align}

  Combining (\ref{AiAj2}) and (\ref{A1A2A3}), we obtain
  \[ \sum_{i < j} \| [A_i, A_j] \|^2 \leqslant \frac{3}{4} \Big( \sum_i \|
     A_i \|^2 \Big)^2 - \frac{1}{2} \sum_i \| A_i \|^4 - \frac{1}{4} \Big(
     \sum_i \| A_i \|^2 - 2 \| A_2 \|^2 \Big)^2 . \]
  This proves the result.
\end{proof}

\begin{lemma}
  \label{3/2Ai22'}Let $A_1, A_2, \ldots, A_p$ be symmetric matrices which
  satisfy
  \[ \langle A_i, A_j \rangle = 0 \quad \tmop{if} \enspace 2 \leqslant i < j,
     \qquad \| A_2 \| \geqslant \cdots \geqslant \| A_p \| . \]
  Then  
  \begin{align}
    \sum_{i, j} \| [A_i, A_j] \|^2 & + \sum_i \| A_i \|^4 \leqslant
    \frac{3}{2} \Big( \sum_i \| A_i \|^2 \Big)^2 \nonumber\\
    & + 2 \| [A_1, A_2] \|^2 - 4 \| A_1 \|^2 \| A_2 \|^2 + 2 \| A_2 \|^2 \|
    A_3 \|^2 . \nonumber
  \end{align}
\end{lemma}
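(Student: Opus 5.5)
The plan is to mirror the proof of Lemma \ref{3/2Ai22}, but to keep the commutator $\|[A_1,A_2]\|^2$ exact instead of estimating it. Write $a_i=\|A_i\|^2$, so that $a_2\geqslant a_3\geqslant\cdots\geqslant a_p$. If $p<3$, interpret $A_3=0$ and $a_3=0$; all sums below are then simply shorter.

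First I split
\[ \sum_{i<j}\|[A_i,A_j]\|^2=\|[A_1,A_2]\|^2+\sum_{j\geqslant 3}\|[A_1,A_j]\|^2+\sum_{i\geqslant 2}\sum_{j>i}\|[A_i,A_j]\|^2 . \]
Lemma \ref{Luineq} applies to the family $A_1,A_3,\ldots,A_p$, since $A_3,\ldots,A_p$ are mutually orthogonal with decreasing norms. This gives
\[ \sum_{j\geqslant 3}\|[A_1,A_j]\|^2\leqslant a_1\Big(\sum_{j\geqslant 3}a_j+a_3\Big). \]
For each $i\geqslant 2$, Lemma \ref{Luineq} also applies to the family $A_i,A_{i+1},\ldots,A_p$, exactly as in (\ref{AiAj2}). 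It gives
\[ \sum_{j>i}\|[A_i,A_j]\|^2\leqslant a_i\Big(\sum_{j>i}a_j+a_{i+1}\Big). \]

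Doubling and adding $\sum_i a_i^2$, I get
\[ \sum_{i,j}\|[A_i,A_j]\|^2+\sum_i a_i^2\leqslant \Big(\sum_i a_i\Big)^2+2\|[A_1,A_2]\|^2-2a_1a_2+2a_1a_3+2a_2a_3+2\sum_{i\geqslant3}a_ia_{i+1}. \]
Comparing with the claimed right-hand side, it remains to prove the scalar inequality
\[ 2a_1a_2+2a_1a_3+2\sum_{i\geqslant 3}a_ia_{i+1}\leqslant \frac12\Big(\sum_i a_i\Big)^2 . \]

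For this final step, expand $\frac12(\sum a_i)^2$ and complete a square:
\begin{align*}
\frac12\Big(\sum_i a_i\Big)^2-2a_1a_2-2a_1a_3-2\sum_{i\geqslant3}a_ia_{i+1}
=&\ \frac12(a_1-a_2-a_3)^2+\frac12\sum_{i\geqslant4}a_i^2+\sum_{j\geqslant4}(a_1+a_2)a_j\\
&+\Big(\sum_{3\leqslant i<j}a_ia_j-\sum_{i\geqslant3}a_ia_{i+1}\Big)+\Big(\sum_{j\geqslant 4}a_2a_j-\sum_{j\geqslant4}a_{j-1}a_j\Big)-\sum_{j\geqslant4}a_2a_j .
\end{align*}
The two bracketed differences are nonnegative. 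The first is nonnegative because the consecutive products $a_ia_{i+1}$ are among the terms $a_ia_j$ with $3\leqslant i<j$. The second is nonnegative because $a_2\geqslant a_{j-1}$ by the monotonicity hypothesis.

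The stray term $-\sum_{j\geqslant4}a_2a_j$ cancels against part of $\sum_{j\geqslant4}(a_1+a_2)a_j$. All remaining terms are nonnegative, which proves the scalar inequality and hence the lemma.

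The only delicate point is this bookkeeping. The monotonicity $\|A_2\|\geqslant\|A_3\|\geqslant\cdots$ must be used to absorb the consecutive products $a_ia_{i+1}$ with $i\geqslant3$. One must also check that each application of Lemma \ref{Luineq} uses a family satisfying its orthogonality and ordering hypotheses.
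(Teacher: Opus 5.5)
Your proof is correct. Up to the final scalar step it coincides with the paper's proof. You use the same splitting of $\sum_{i<j}\|[A_i,A_j]\|^2$ and the same applications of Lemma \ref{Luineq}, to $A_1,A_3,\ldots,A_p$ and to $A_i,A_{i+1},\ldots,A_p$. You also reach the same scalar inequality $2a_1a_2+2a_1a_3+2\sum_{i\geqslant 3}a_ia_{i+1}\leqslant \frac12(\sum_i a_i)^2$, which the paper writes as $a_1a_3+\sum_{i=1}^{p-1}a_ia_{i+1}\leqslant a_2a_3+\frac14(\sum_i a_i)^2$.

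The two arguments differ only in how this scalar inequality is proved. The paper replaces $a_ia_{i+1}$ by $a_1a_{i+1}$ for $i\geqslant 3$ and then uses $a_1\sum_{i\geqslant 2}a_i\leqslant \frac14(\sum_i a_i)^2$. That replacement tacitly requires $\|A_1\|\geqslant\|A_i\|$ for $i\geqslant 3$, which is not among the lemma's hypotheses. It does hold where the lemma is applied, since there $\|A_1\|^2=S_1$ is the largest eigenvalue of $\sigma_{\alpha\beta}$.

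Your sum-of-squares identity checks out term by term. It uses only $a_i\geqslant 0$ and $a_2\geqslant a_3\geqslant\cdots$, so it proves the lemma exactly as stated. The cost is somewhat heavier bookkeeping than the paper's short AM--GM argument.
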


\begin{proof}
  Using Lemma \ref{Luineq} again, we obtain  
  \begin{align}
    & \sum_{i < j} \| [A_i, A_j] \|^2 - \| [A_1, A_2] \|^2 \nonumber\\
    = & \sum_{j = 3}^p \| [A_1, A_j] \|^2 + \sum_{i = 2}^p \sum_{j = i + 1}^p
    \| [A_i, A_j] \|^2 \nonumber\\
    \leqslant & \| A_1 \|^2 \Big( \sum_{j = 3}^p \| A_j \|^2 + \| A_3 \|^2
    \Big) + \sum_{i = 2}^{p - 1} \| A_i \|^2 \Big( \sum_{j = i + 1}^p \|
    A_j \|^2 + \| A_{i + 1} \|^2 \Big) \nonumber\\
    = & - 2 \| A_1 \|^2 \| A_2 \|^2 + \| A_1 \|^2 \| A_3 \|^2 + \sum_{i < j}
    \| A_i \|^2 \| A_j \|^2 + \sum_{i = 1}^{p - 1} \| A_i \|^2 \| A_{i + 1}
    \|^2  \label{AiAj2b}\\
    = & - 2 \| A_1 \|^2 \| A_2 \|^2 + \frac{1}{2} \Big( \sum_i \| A_i \|^2
    \Big)^2 - \frac{1}{2} \sum_i \| A_i \|^4 \nonumber\\
    & + \| A_1 \|^2 \| A_3 \|^2 + \sum_{i = 1}^{p - 1} \| A_i \|^2 \| A_{i +
    1} \|^2 . \nonumber
  \end{align}
  
  We have  
  \begin{align}
    & \| A_1 \|^2 \| A_3 \|^2 + \sum_{i = 1}^{p - 1} \| A_i \|^2 \| A_{i + 1}
    \|^2 \nonumber\\
    \leqslant & \| A_1 \|^2 \| A_3 \|^2 + \| A_1 \|^2 \| A_2 \|^2 + \| A_2
    \|^2 \| A_3 \|^2 + \sum_{i = 3}^{p - 1} \| A_1 \|^2 \| A_{i + 1} \|^2
    \nonumber\\
    = & \| A_2 \|^2 \| A_3 \|^2 + \| A_1 \|^2  \sum_{i = 2}^{p - 1} \| A_{i +
    1} \|^2  \label{A1A2A3b}\\
    \leqslant & \| A_2 \|^2 \| A_3 \|^2 + \frac{1}{4}  \Big( \sum_{i = 1}^{p
    - 1} \| A_{i + 1} \|^2 \Big)^2 . \nonumber
  \end{align}
  
  Combining (\ref{AiAj2b}) and (\ref{A1A2A3b}), we obtain  
  \begin{align}
    \sum_{i < j} \| [A_i, A_j] \|^2 - \| [A_1, A_2] \|^2 \leqslant & \| A_2
    \|^2 \| A_3 \|^2 - 2 \| A_1 \|^2 \| A_2 \|^2 \nonumber\\
    & + \frac{3}{4} \Big( \sum_i \| A_i \|^2 \Big)^2 - \frac{1}{2} \sum_i
    \| A_i \|^4 . \nonumber
  \end{align}
  
  This proves the result.
\end{proof}

Let $\sigma_{\alpha \beta} = \langle A_{\alpha}, A_{\beta} \rangle$, which is
a $p \times p$ symmetric matrix. Let $\{ S_{\alpha} \}_{1 \leqslant \alpha
\leqslant p}$ be the eigenvalues of $\sigma_{\alpha \beta}$ in descending
order, i.e.
\[ S_1 \geqslant S_2 \geqslant \cdots \geqslant S_p . \]
Then $S = \sum_{\alpha} S_{\alpha}$. Choose an orthonormal frame $\{
\nu_{\alpha} \}$ for $N_x M$ such that $\sigma_{\alpha \alpha} = S_{\alpha}$
and $\sigma_{\alpha \beta} = 0$ if $\alpha \neq \beta$. Set
\[ E = \frac{3}{2} S^2 - \sum_{\alpha, \beta} | [A_{\alpha}, A_{\beta}] |^2 -
   \sum_{\alpha} S_{\alpha}^2 . \]
From Lemmas \ref{3/2Ai22} and \ref{3/2Ai22'}, we get
\begin{equation}
  (S - 2 S_2)^2 \leqslant 2 E \label{S-2S2<less>2E}
\end{equation}
and
\begin{equation}
  2 S_1 S_2 - | [A_1, A_2] |^2 \leqslant \frac{1}{2} E + S_2 S_3 \leqslant
  \frac{1}{2} E + \frac{1}{2} S S_3 . \label{<less>E+2S2S3}
\end{equation}
Then we have the following estimates for $S_{\alpha}$.

\begin{theorem}
  \label{S2<gtr>E}Let $M$ be a submanifold in $\mathbb{S}^{n + p}$. Then
  \[ \frac{S}{2} + \sqrt{\frac{E}{2}} \geqslant S_1 \geqslant S_2 \geqslant
     \frac{S}{2} - \sqrt{\frac{E}{2}}, \qquad \sum_{\alpha > 2} S_{\alpha}
     \leqslant \sqrt{2 E} . \]
\end{theorem}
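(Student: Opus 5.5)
The three bounds all follow from inequality (\ref{S-2S2<less>2E}), $(S-2S_2)^2 \leqslant 2E$, together with the ordering $S_1 \geqslant S_2 \geqslant \cdots \geqslant S_p \geqslant 0$ and $\sum_\alpha S_\alpha = S$. Before using it, I check that Lemma \ref{3/2Ai22} applies in the chosen frame. We diagonalize $\sigma_{\alpha\beta}$ with eigenvalues in descending order, so $\langle A_\alpha, A_\beta\rangle = 0$ for $\alpha \neq \beta$ and $\|A_2\| \geqslant \cdots \geqslant \|A_p\|$. The hypotheses of Lemma \ref{3/2Ai22} therefore hold with $A_1$ playing the distinguished role. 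In the same way, $E$ is frame independent: $\sum_\alpha S_\alpha^2 = \sum_{\alpha,\beta}\sigma_{\alpha\beta}^2$, and the commutator sum is invariant. Hence $E \geqslant \frac12 (S-2S_2)^2 \geqslant 0$, and $\sqrt{E/2}$ makes sense.

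\textbf{Lower bound for $S_2$.} Taking square roots in $(S-2S_2)^2 \leqslant 2E$ gives $|S - 2S_2| \leqslant \sqrt{2E}$. Hence $S_2 \geqslant \frac{S}{2} - \frac12\sqrt{2E} = \frac S2 - \sqrt{\frac E2}$. The inequality $S_1 \geqslant S_2$ holds by the ordering.

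\textbf{Upper bound for $S_1$.} From the same absolute-value bound, $S_2 \leqslant \frac S2 + \sqrt{E/2}$, but this bounds $S_2$, not $S_1$. For $S_1$ I use $S_1 = S - S_2 - \sum_{\alpha>2}S_\alpha \leqslant S - S_2$, since every $S_\alpha \geqslant 0$ as an eigenvalue of a Gram matrix. Combining this with the lower bound on $S_2$ gives $S_1 \leqslant \frac S2 + \sqrt{E/2}$.

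\textbf{Tail bound.} Again using positivity and $S_1 \geqslant S_2$, we have $\sum_{\alpha>2}S_\alpha = S - S_1 - S_2 \leqslant S - 2S_2 \leqslant \sqrt{2E}$.

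\textbf{Main obstacle.} The only delicate point is confirming that inequality (\ref{S-2S2<less>2E}) really follows from Lemma \ref{3/2Ai22} in this normalization. Lemma \ref{3/2Ai22} sums $\|[A_i,A_j]\|^2$ over all ordered pairs $(i,j)$, which matches the definition of $E$. With $\|A_i\|^2 = S_i$, the lemma reads $\sum\|[A_i,A_j]\|^2 + \sum S_i^2 \leqslant \frac32 S^2 - \frac12 (S-2S_2)^2$. Rearranging gives exactly $\frac12(S-2S_2)^2 \leqslant E$. Once this is checked, the three bounds above complete the argument.
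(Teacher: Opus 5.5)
Your proof is correct and matches the paper's argument: from $(S-2S_2)^2\leqslant 2E$ you get $S-2S_2\leqslant\sqrt{2E}$, and then the three bounds via $S_1\leqslant S-S_2$ and $S-S_1-S_2\leqslant S-2S_2$. Your check that Lemma \ref{3/2Ai22} applies in the frame diagonalizing $\sigma_{\alpha\beta}$, and your explicit use of $S_\alpha\geqslant 0$ to get $S_1\leqslant S-S_2$, make explicit what the paper leaves implicit.
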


\begin{proof}
  It follows from (\ref{S-2S2<less>2E}) that
  \[ S - 2 S_2 \leqslant \sqrt{2 E} . \]
  Then we have
  \[ S_2 = \frac{1}{2} [S - (S - 2 S_2)] \geqslant \frac{S}{2} -
     \sqrt{\frac{E}{2}}, \]
  \[ S_1 \leqslant S - S_2 = \frac{1}{2} [S + (S - 2 S_2)] \leqslant
     \frac{S}{2} + \sqrt{\frac{E}{2}} \]
  and
  \[ \sum_{\alpha > 2} S_{\alpha} = S - S_1 - S_2 \leqslant S - 2 S_2
     \leqslant \sqrt{2 E} . \]
  
\end{proof}

\section{A lower bound for $| \nabla^2 h |^2$}

In this section we derive a lower bound for $| \nabla^2 h |$. Since
$h^{\alpha}_{i j k l}$ is symmetric on indices $i, j, k$, we get
\begin{align}
  0 \leqslant & \sum_{\alpha, i, j, k, l} (h^{\alpha}_{ijkl} + h^{\alpha}_{l
  ijk} + h^{\alpha}_{klij} + h^{\alpha}_{jk li})^2 \nonumber\\
  = & 4 | \nabla^2 h |^2 + 12 \sum_{\alpha, i, j, k, l} h^{\alpha}_{ijkl}
  h^{\alpha}_{ijl k} \nonumber\\
  = & 4 | \nabla^2 h |^2 + 6 \sum_{\alpha, i, j, k, l} [ (h^{\alpha}_{ijkl})^2
  + (h^{\alpha}_{ijl k})^2 - (h^{\alpha}_{ijkl} - h^{\alpha}_{ijl k})^2]
  \nonumber\\
  = & 16 | \nabla^2 h |^2 - 6 \sum_{\alpha, i, j, k, l}  (h^{\alpha}_{ijkl} -
  h^{\alpha}_{ijl k})^2 . \nonumber
\end{align}

So,
\[ | \nabla^2 h |^2 \geqslant \frac{3}{8} \sum_{\alpha, i, j, k, l} 
   (h^{\alpha}_{ijkl} - h^{\alpha}_{ijl k})^2 . \]
Using the Ricci identity, we get
\begin{align}
  h^{\alpha}_{ijkl} - h^{\alpha}_{ijl k} = & \sum_m (h^{\alpha}_{i m} R_{m j k
  l} + h^{\alpha}_{j m} R_{m i k l}) - \sum_{\beta} h^{\beta}_{i j}
  R^{\bot}_{\alpha \beta k l} \nonumber\\
  = & h^{\alpha}_{i k} \delta_{j l} - h^{\alpha}_{i l} \delta_{j k} +
  h^{\alpha}_{j l} \delta_{i k} - h^{\alpha}_{j k} \delta_{i l} \nonumber\\
  & + \sum_m (h^{\alpha}_{i m}  \hat{R}_{m j k l} + h^{\alpha}_{j m} 
  \hat{R}_{m i k l}) - \sum_{\beta} h^{\beta}_{i j} R^{\bot}_{\alpha \beta k
  l} . \nonumber
\end{align}

Define two tensors $P, Q$ by
\[ P^{\alpha}_{i j k l} = h^{\alpha}_{i k} \delta_{j l} - h^{\alpha}_{i l}
   \delta_{j k} + h^{\alpha}_{j k} \delta_{i l} - h^{\alpha}_{j l} \delta_{i
   k}, \]
\[ Q^{\alpha}_{i j k l} = - \sum_m (h^{\alpha}_{i m}  \hat{R}_{m j k l} +
   h^{\alpha}_{j m}  \hat{R}_{m i k l}) + \sum_{\beta} h^{\beta}_{i j}
   R^{\bot}_{\alpha \beta k l} . \]
Then
\begin{equation}
  | \nabla^2 h |^2 \geqslant \frac{3}{8} | P - Q |^2 = \frac{3}{8} | P |^2 -
  \frac{3}{4} \langle P, Q \rangle + \frac{3}{8} | Q |^2 . \label{P-Q}
\end{equation}
By direct computations, we get
\begin{equation}
  | P |^2 = 4 n S, \qquad \langle P, Q \rangle = 4 \sum_{\alpha, \beta} [|
  [A_{\alpha}, A_{\beta}] |^2 + \langle A_{\alpha}, A_{\beta} \rangle^2] = 6
  S^2 - 4 E. \label{PQ}
\end{equation}

We derive the following estimate for $| Q |$.

\begin{lemma}
  \label{ddh2<gtr>}We have
  \[ | Q |^2 \geqslant \frac{9}{2} S^3 - 42 \sqrt{2 E} S^2 . \]
\end{lemma}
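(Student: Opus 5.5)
We need a lower bound on $|Q|^2$, and the natural test case is the Veronese-type configuration. There $E=0$, only $A_1,A_2$ are nonzero, and they have the DDVV equality form. Any lower bound of the shape $cS^3 - C\sqrt{2E}\,S^2$ should be obtained by comparing with that extremal configuration and treating $E$ as a perturbation. The plan is to work in the normal frame that diagonalizes $\sigma_{\alpha\beta}$, as in Section 4.

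\textbf{Step 1: expand $|Q|^2$.} Write $Q=Q_1+Q_2$, where $(Q_1)^\alpha_{ijkl}=-\sum_m(h^\alpha_{im}\hat R_{mjkl}+h^\alpha_{jm}\hat R_{mikl})$ and $(Q_2)^\alpha_{ijkl}=\sum_\beta h^\beta_{ij}R^\perp_{\alpha\beta kl}$. Then
\[|Q|^2=|Q_1|^2+2\langle Q_1,Q_2\rangle+|Q_2|^2.\]
By the Ricci equation, $R^\perp_{\alpha\beta kl}=[A_\alpha,A_\beta]_{kl}$ up to sign, so
\[|Q_2|^2=\sum_{\alpha,\beta,\gamma}\langle A_\beta,A_\gamma\rangle\langle[A_\alpha,A_\beta],[A_\alpha,A_\gamma]\rangle=\sum_{\alpha,\beta}S_\beta\,|[A_\alpha,A_\beta]|^2 .\]
The terms $|Q_1|^2$ and $\langle Q_1,Q_2\rangle$ are quartic expressions in the $A_\alpha$, with $\hat R_{mjkl}=\sum_\gamma\big((A_\gamma)_{mk}(A_\gamma)_{jl}-(A_\gamma)_{ml}(A_\gamma)_{jk}\big)$. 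I will not bound these separately. Instead I will regroup $|Q|^2$ so that the dominant contribution comes from the pair $(A_1,A_2)$ alone.

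\textbf{Step 2: isolate the $(A_1,A_2)$ part and compute it exactly.} Split $h=h'+h''$, where $h'$ collects the $\nu_1,\nu_2$ components and $h''$ the rest. Since $Q$ is cubic in $h$, we have $Q=Q(h')+\text{(terms containing at least one }h''\text{)}$, and the latter are bounded pointwise by $C\,|h''|\,S$. Theorem \ref{S2<gtr>E} gives $|h''|^2=\sum_{\alpha>2}S_\alpha\le\sqrt{2E}$. This produces an error of size $\sqrt{S}\,(2E)^{1/4}S^2$ rather than $\sqrt{2E}\,S^2$, which is the wrong power. Cross terms linear in $h''$ must therefore be paired carefully: the terms of $|Q|^2$ that are exactly linear in $h''$ should vanish or be controllable, because they pair $\nu_{\alpha>2}$-components against $\nu_{1,2}$-components and the frame is orthogonal. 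Once that is arranged, the remaining error is quadratic in $h''$, hence $\lesssim S_{>2}S^2\le \sqrt{2E}\,S^2$.

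For the two-matrix part, Theorem \ref{S2<gtr>E} gives $S_1,S_2=S/2+O(\sqrt{E})$. Inequality (\ref{<less>E+2S2S3}) gives $|[A_1,A_2]|^2\ge 2S_1S_2-\tfrac12E-\tfrac12SS_3$, which is close to the DDVV maximum. This means the pair $(A_1,A_2)$ is close to the equality configuration of the DDVV lemma. I will express $|Q(h')|^2$ as a polynomial in invariants such as $S_1$, $S_2$, $|[A_1,A_2]|^2$, $\operatorname{tr}(A_1^2A_2^2)$ and $\operatorname{tr}(A_1A_2)^2$. Evaluating it at the equality configuration, with $\mu$ chosen so that $4\mu^2=S$, gives the leading constant $\tfrac92 S^3$. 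The deviations are then bounded linearly by $S^2$ times the defects $|S_1-S/2|$, $|S_2-S/2|$, $S_3$ and $(2S_1S_2-|[A_1,A_2]|^2)/S$. Each of these defects is at most a constant times $\sqrt{2E}$, using $E\le\sqrt{2E}\cdot\sqrt{E/2}$ and $E\lesssim S^2$.

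\textbf{Main obstacle.} The hardest part will be getting a linear, not square-root, dependence on $E$ in Step 2. To handle this, I will bound the non-extremal invariants, such as $\operatorname{tr}(A_1^2A_2^2)$, using the identity
\[2\operatorname{tr}(A_1^2A_2^2)-2\operatorname{tr}(A_1A_2A_1A_2)=|[A_1,A_2]|^2\]
together with Cauchy--Schwarz, so that every defect appears as a signed quantity controlled by $E$. If the exact constant $42$ does not come out of this, the bookkeeping of the individual coefficients is where I expect to have to adjust.
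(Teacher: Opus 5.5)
You correctly identify that $|Q|^2=\frac92S^3$ at the Veronese pair. The gap is the step that carries the whole lemma: the claim that the deviations of $|Q(h')|^2$ from $\frac92S^3$ are ``bounded linearly by $S^2$ times the defects''. You assert it without any mechanism, and it runs into the same wrong-power problem you diagnosed for $h''$, now inside the pair $(A_1,A_2)$.

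The defect $\delta=2S_1S_2-|[A_1,A_2]|^2$ is nonnegative and vanishes at the Veronese pair, so it is quadratic in the displacement from that pair. The bound you have, $\delta\le\frac{\varepsilon^2+2\varepsilon}{4}S^2$ from (\ref{<less>E+2S2S3}) with $\varepsilon=\sqrt{2E}/S$, therefore controls the displacement only to order $\sqrt{\varepsilon}$. Any part of $|Q(h')|^2$ that varies to first order in the displacement would cost $\sqrt{\varepsilon}\,S^3$ rather than $\varepsilon S^3$.

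To close the argument you need two things. First, (a): a quantitative stability statement turning small $\delta$ (with $S_1\approx S_2$) into explicit closeness of $(A_1,A_2)$ to the Veronese pair. Second, (b): vanishing of the first-order variation of $|Q(h')|^2$ transverse to the extremal set. Statement (b) is in fact true, since the stabilizer of the Veronese pair in $O(n)\times O(2)$ forces the gradient of any invariant function there to be radial. Your proposal supplies neither. The commutator identity with Cauchy--Schwarz is no substitute for (a), and bounds of that type tend to destroy (b). For example, $\lambda_1^2-\frac{S_1}{2}$ is of first order in $\lambda_1+\lambda_2$, whereas $2S_1-(\lambda_1-\lambda_2)^2=(\lambda_1+\lambda_2)^2+2\sum_{i\geqslant3}\lambda_i^2$ is of second order. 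Also, $|Q(h')|^2$ does not reduce to a few invariants like those you list: already for $A_2=0$ it equals $4\operatorname{tr}A_1^4\operatorname{tr}A_1^2-4(\operatorname{tr}A_1^3)^2$.

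The paper avoids all this by never expanding $|Q|^2$. It keeps only $4(Q^1_{1212})^2$ in a frame diagonalizing $A_1$, plus the analogous term for $A_2$. Here $Q^1_{1212}=(\lambda_1-\lambda_2)\sum_\alpha\bigl(2(h^\alpha_{12})^2-h^\alpha_{11}h^\alpha_{22}\bigr)$, and the $\alpha>2$ terms are absorbed by $-h^\alpha_{11}h^\alpha_{22}\geqslant-\frac12S_\alpha$, so no splitting $h=h'+h''$ is needed.

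Stability comes from Lu's Lemma~\ref{maxrij}, with $|h^2_{12}|$ chosen as the largest off-diagonal entry. This gives $S_1(V+W)\leqslant S_1S_2-\frac12|[A_1,A_2]|^2$ and $\bigl(2S_1-(\lambda_1-\lambda_2)^2\bigr)U\leqslant S_1S_2-\frac12|[A_1,A_2]|^2$. Both factors of $Q^1_{1212}$ are then bounded below through $(\lambda_1-\lambda_2)^2$, $-2\lambda_1\lambda_2\geqslant(\lambda_1-\lambda_2)^2-S_1$, $U$ and $V$, all of which have second-order defects. So the cancellation (b) is built in, and each factor loses only $O(\varepsilon)$.

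This yields $|Q|^2\geqslant8\varphi(\varepsilon)S^3$ with $\varphi\geqslant\frac9{16}-\frac{21}{4}\varepsilon$ on all of $[0,\frac3{28})$, which is exactly where $42=8\cdot\frac{21}{4}$ comes from. A perturbative expansion around the Veronese pair, even if completed, would give an inexplicit constant valid only for small $\varepsilon$. The value $42$ matters: it becomes $\frac{63}{4}=\frac38\cdot42$ in Theorem~\ref{ddh2<gtr>S3} and feeds the final pinching constant.

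Finally, your claim that the terms linear in $h''$ vanish is correct, but not because the frame is orthogonal. The reason is that $\hat R$ has no cross terms between distinct normal directions, and $R^\perp_{\alpha\beta}$ with $\beta>2$ always appears multiplied by $h^\beta$. So for $\alpha\leqslant2$ the $h''$-dependence of $Q^\alpha$ is quadratic, and the $Q^\alpha$ with $\alpha>2$ can simply be dropped.
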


\begin{proof}
  If $\frac{9}{2} S \leqslant 42 \sqrt{2 E}$, the assertion is valid. If
  $\frac{9}{2} S > 42 \sqrt{2 E}$, let
  \[ \varepsilon = \frac{\sqrt{2 E}}{S}, \]
  then $0 \leqslant \varepsilon < \frac{3}{28}$.
  
  From Theorem \ref{S2<gtr>E}, we have
  \begin{equation}
    S_1 \geqslant S_2 \geqslant \frac{1 - \varepsilon}{2} S, \qquad
    \sum_{\alpha > 2} S_{\alpha} \leqslant \varepsilon S. \label{S2<gtr>}
  \end{equation}
  Then by (\ref{<less>E+2S2S3}), we get
  \begin{equation}
    2 S_1 S_2 - | [A_1, A_2] |^2 \leqslant \frac{\varepsilon^2 + 2
    \varepsilon}{4} S^2 . \label{S1S2-}
  \end{equation}

  Now we choose an orthonormal frame $\{ e_i \}$ for $T_x M$ such that $h^1_{i
  j}$ are diagonal matrices. Let $\lambda_i = h^1_{i i}$, which satisfy
  $\sum_i \lambda_i = 0$, $\sum_i \lambda_i^2 = S_1$. Without loss of
  generality, we assume
  \[ | h^2_{1 2} | = \max_{i \neq j} \{ | h^2_{i j} | \} . \]
  Set
  \[ U = (h^2_{12})^2, \quad V = \sum_i (h^2_{ii})^2, \quad W = \sum_{i < j, j
     \geqslant 3} (h^2_{ij})^2 . \]
  Then we have $S_2 = V + 2 U + 2 W$.
  
  From Lemma \ref{maxrij}, we have
  \[ \frac{1}{2} | [A_1, A_2] |^2 = \sum_{i < j} (\lambda_i - \lambda_j)^2
     (h^2_{ij})^2 \leqslant S_1  (W + 2 U) . \]
  This yields
  \[ S_1 S_2 - \frac{1}{2} | [A_1, A_2] |^2 \geqslant S_1  (V + W) . \]
  By (\ref{S2<gtr>}) and (\ref{S1S2-}), we get
  \[ V + W \leqslant \frac{2 S_1 S_2 - | [A_1, A_2] |^2}{2 S_1} \leqslant
     \frac{\varepsilon^2 + 2 \varepsilon}{4 (1 - \varepsilon)} S = \kappa S,
  \]
  where $\kappa = \frac{\varepsilon^2 + 2 \varepsilon}{4 (1 - \varepsilon)}$.
  Then we have
  \begin{equation}
    U \geqslant U - \frac{V}{4} \geqslant \frac{S_2}{2} - V - W \geqslant
    \frac{1 - \varepsilon - 4 \kappa}{4} S. \label{h212}
  \end{equation}
  On the other hand, we have  
  \begin{align}
    & S_1 S_2 - \frac{1}{2} | [A_1, A_2] |^2 \nonumber\\
    = & S_1 S_2 - \sum_{i < j} (\lambda_i - \lambda_j)^2 (h^2_{ij})^2
    \nonumber\\
    = & S_1 S_2 - (\lambda_1 - \lambda_2)^2 (h^2_{12})^2 - \sum_{i < j, j
    \geqslant 3} (\lambda_i - \lambda_j)^2 (h^2_{ij})^2 \nonumber\\
    \geqslant & S_1 S_2 - (\lambda_1 - \lambda_2)^2 (h^2_{12})^2 - \sum_{i <
    j, j \geqslant 3} 2 (\lambda_i^2 + \lambda_j^2) (h^2_{ij})^2 \nonumber\\
    \geqslant & 2 S_1 (U + W) - (\lambda_1 - \lambda_2)^2 U - 2 S_1 W
    \nonumber\\
    = & (2 S_1 - (\lambda_1 - \lambda_2)^2) U. \nonumber
  \end{align}
  
  From (\ref{S1S2-}) and (\ref{h212}), we get
  \[ 2 S_1 - (\lambda_1 - \lambda_2)^2 \leqslant \frac{2 S_1 S_2 - | [A_1,
     A_2] |^2}{2 U} \leqslant \frac{\varepsilon^2 + 2 \varepsilon}{2 (1 -
     \varepsilon - 4 \kappa)} S = \eta S, \]
  where $\eta = \frac{\varepsilon^2 + 2 \varepsilon}{2 (1 - \varepsilon - 4
  \kappa)}$. Hence we have
  \begin{equation}
    (\lambda_1 - \lambda_2)^2 \geqslant 2 S_1 - \eta S \geqslant (1 -
    \varepsilon - \eta) S, \label{la1-la2}
  \end{equation}
  \begin{equation}
    - 2 \lambda_1 \lambda_2 = - \lambda_1^2 - \lambda_2^2 + (\lambda_1 -
    \lambda_2)^2 \geqslant S_1 - [2 S_1 - (\lambda_1 - \lambda_2)^2] \geqslant
    \frac{1 - \varepsilon - 2 \eta}{2} S. \label{-2la1la2}
  \end{equation}

  We have  
  \begin{align}
    Q^1_{1212} = & - \sum_m (h^1_{1 m}  \hat{R}_{m 2 1 2} + h^1_{2 m} 
    \hat{R}_{m 1 1 2}) - \sum_{\alpha} h^{\alpha}_{1 2} R^{\bot}_{\alpha 1 1
    2} \nonumber\\
    = & (\lambda_1 - \lambda_2) \sum_{\alpha} (2 (h^{\alpha}_{1 2})^2 - h_{1
    1}^{\alpha} h_{2 2}^{\alpha}) . \nonumber
  \end{align}
  
  By (\ref{S2<gtr>}), (\ref{h212}) and (\ref{-2la1la2}), we get  
  \begin{align}
    & \sum_{\alpha} (2 (h^{\alpha}_{12})^2 - h_{11}^{\alpha} h_{22}^{\alpha})
    \nonumber\\
    = & - \lambda_1 \lambda_2 + 2 (h^2_{12})^2 - h_{11}^2 h_{22}^2 +
    \sum_{\alpha > 2} (2 (h^{\alpha}_{12})^2 - h_{11}^{\alpha}
    h_{22}^{\alpha}) \nonumber\\
    \geqslant & - \lambda_1 \lambda_2 + 2 (h^2_{12})^2 - \frac{(h_{11}^2)^2 +
    (h_{22}^2)^2}{2} - \sum_{\alpha > 2} \frac{(h_{11}^{\alpha})^2 +
    (h_{22}^{\alpha})^2}{2} \nonumber\\
    \geqslant & - \lambda_1 \lambda_2 + 2 \left( U - \frac{V}{4} \right) -
    \frac{1}{2} \sum_{\alpha > 2} S_{\alpha} \nonumber\\
    \geqslant & \left( \frac{1 - \varepsilon - 2 \eta}{4} + \frac{1 -
    \varepsilon - 4 \kappa}{2} - \frac{1}{2} \varepsilon \right) S \nonumber\\
    = & \left( \frac{3}{4} - \frac{5 \varepsilon}{4} - 2 \kappa -
    \frac{\eta}{2} \right) S. \nonumber
  \end{align}
  
  Combining the above inequality with (\ref{la1-la2}), we obtain
  \[ (Q^1_{1212})^2 \geqslant (1 - \varepsilon - \eta) \left( \frac{3}{4} -
     \frac{5 \varepsilon}{4} - 2 \kappa - \frac{\eta}{2} \right)^2 S^3 =
     \varphi S^3, \]
  where
  \[ \varphi = (1 - \varepsilon - \eta) \left( \frac{3}{4} - \frac{5
     \varepsilon}{4} - 2 \kappa - \frac{\eta}{2} \right)^2 . \]
  Note that $Q^{\alpha}_{i j k l} = Q^{\alpha}_{j i k l} = - Q^{\alpha}_{i j l
  k}$. So,
  \[ \sum_{i, j, k, l} (Q^1_{i j k l})^2 \geqslant 4 (Q^1_{1212})^2 \geqslant
     4 \varphi S^3 . \]

  Now we choose an orthonormal frame $\{e_i \}$ for $T_x M$ such that
  $h^2_{ij}$ is a diagonal matrix. Using the same argument as above, we
  obtain
  \[ \sum_{i, j, k, l} (Q^2_{i j k l})^2 \geqslant 4 \varphi S^3 . \]
  Thus,
  \[ | Q |^2 \geqslant \sum_{i, j, k, l} ((Q^1_{i j k l})^2 + (Q^2_{i j k
     l})^2) \geqslant 8 \varphi S^3 . \]
  By a direct calculation, we get
  \[ \varphi \geqslant \frac{9}{16} - \frac{21 \varepsilon}{4} \quad
     \tmop{for} \enspace 0 \leqslant \varepsilon < \frac{3}{28} . \]
  Therefore, we have
  \[ | Q |^2 \geqslant \left( \frac{9}{2} - 42 \varepsilon \right) S^3 =
     \frac{9}{2} S^3 - 42 \sqrt{2 E} S^2 . \]
  
\end{proof}

This lemma together with (\ref{P-Q}) and (\ref{PQ}) yields

\begin{theorem}
  \label{ddh2<gtr>S3}Let $M$ be a minimal submanifold in $\mathbb{S}^{n +
  p}$. Then
  \[ | \nabla^2 h |^2 \geqslant \frac{3}{2} n S - \frac{9}{2} S^2 +
     \frac{27}{16} S^3 + 3 E - \frac{63}{4}  \sqrt{2 E} S^2 . \]
\end{theorem}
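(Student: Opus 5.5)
The plan is to insert Lemma \ref{ddh2<gtr>} and the identities (\ref{PQ}) into the lower bound (\ref{P-Q}),
\[ | \nabla^2 h |^2 \geqslant \frac{3}{8} | P |^2 - \frac{3}{4} \langle P, Q \rangle + \frac{3}{8} | Q |^2 , \]
and then simplify term by term. Each piece is already available from the earlier results, so the argument reduces to bookkeeping.

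First, by (\ref{PQ}) we have $|P|^2 = 4nS$, which gives $\frac{3}{8}|P|^2 = \frac{3}{2} nS$. Next, also by (\ref{PQ}), $\langle P, Q\rangle = 6S^2 - 4E$. Hence
\[ -\frac{3}{4}\langle P, Q\rangle = -\frac{9}{2} S^2 + 3E . \]
Finally, Lemma \ref{ddh2<gtr>} gives $|Q|^2 \geqslant \frac{9}{2}S^3 - 42\sqrt{2E}\,S^2$. Multiplying by $\frac{3}{8}$ yields
\[ \frac{3}{8}|Q|^2 \geqslant \frac{27}{16}S^3 - \frac{63}{4}\sqrt{2E}\,S^2 . \]
Adding the three pieces produces exactly
\[ \frac{3}{2} nS - \frac{9}{2}S^2 + \frac{27}{16}S^3 + 3E - \frac{63}{4}\sqrt{2E}\,S^2 , \]
which is the asserted bound. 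The coefficient of $|Q|^2$ in (\ref{P-Q}) is positive, so applying the lower bound of the lemma there is legitimate.

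The only nontrivial step is checking the ``direct computations'' behind (\ref{PQ}), in case they need to be justified. For $|P|^2$, we expand the four Kronecker terms and use the trace condition $\sum_i h^\alpha_{ii} = 0$ to kill the cross terms; this leaves $4nS$. For $\langle P, Q\rangle$, contracting the $\delta$'s of $P$ against $Q$ reduces each term to traces of products of three shape operators with $\hat R$ or $R^\perp$. Using the Gauss and Ricci expressions, these assemble into $4\sum_{\alpha,\beta}\big(|[A_\alpha,A_\beta]|^2 + \langle A_\alpha, A_\beta\rangle^2\big)$, consistent with the cubic term in Simons' formula (\ref{LapS}). Since the normal frame diagonalizes $\sigma$, we have $\sum_{\alpha,\beta}\sigma_{\alpha\beta}^2 = \sum_\alpha S_\alpha^2$, and the definition of $E$ then gives $6S^2 - 4E$. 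I expect this sign and index bookkeeping in $\langle P, Q\rangle$ to be the main place where errors could arise; given the earlier results as stated, the theorem follows immediately.
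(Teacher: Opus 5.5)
Your proposal is correct and is exactly the paper's argument: the theorem follows by inserting $|P|^2 = 4nS$, $\langle P, Q\rangle = 6S^2 - 4E$ and the bound $|Q|^2 \geqslant \frac{9}{2}S^3 - 42\sqrt{2E}\,S^2$ into (\ref{P-Q}), and your coefficient bookkeeping is right. One small correction to your side verification of $|P|^2$: only two of the six cross terms vanish because $\sum_i h^\alpha_{ii} = 0$, while the other four equal $\pm S$ and cancel in pairs, so the total is still $4nS$.
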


\section{A pinching theorem}

Now we derive the following integral inequality for compact minimal
submanifolds in spheres.

\begin{theorem}
  \label{21b-2}Let $M$ be an $n$-dimensional compact minimal submanifold in
  $\mathbb{S}^{n + p}$. For any positive constant $b$ such that $b \geqslant
  \frac{8}{63} (4 S - n)$, we have
  \[ \int_M \left[ \frac{7}{3 b} S^3 - \frac{1}{2} S^2 + 7 bS - \frac{2}{9}
     (21 b - 2) n \right] S \geqslant 0. \]
\end{theorem}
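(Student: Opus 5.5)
The plan is to play the upper bound of Theorem \ref{intLapDh2} against the pointwise lower bound of Theorem \ref{ddh2<gtr>S3}. The terms involving $|\nabla h|^2$ will be converted into polynomial expressions in $S$ and $E$ by integrating Simons' formula (\ref{LapS}). Note that (\ref{LapS}) can be rewritten as
\[ \frac{1}{2}\Delta S = |\nabla h|^2 + nS - \frac{3}{2}S^2 + E, \]
since $\sum_{\alpha,\beta}[|[A_\alpha,A_\beta]|^2 + \sigma_{\alpha\beta}^2] = \frac32 S^2 - E$ by the definition of $E$.

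First I integrate this identity over $M$, giving
\[ \int_M |\nabla h|^2 = \int_M \Big(\frac{3}{2}S^2 - E - nS\Big). \]
Next I multiply it by $S$ and integrate. Since $\int_M S\Delta S = -\int_M|\nabla S|^2 \leqslant 0$, this gives
\[ \int_M S|\nabla h|^2 \leqslant \int_M S\Big(\frac{3}{2}S^2 - E - nS\Big). \]
I then split the right side of Theorem \ref{intLapDh2} as $8\int_M S|\nabla h|^2 - (2n+3)\int_M|\nabla h|^2$. The first piece has a positive coefficient, so the inequality can be used. The second piece is an exact identity, so no sign problem arises even when $8S-2n-3<0$. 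Inserting the lower bound of Theorem \ref{ddh2<gtr>S3} on the left then yields an integral inequality
\[ 0 \leqslant \int_M \Big[ p(S) - (8S-2n)E + \frac{63}{4}\sqrt{2E}\,S^2 \Big], \]
where $p$ is an explicit cubic in $S$ (with $n$-dependent coefficients) collecting all the pure $S$ terms: $12S^3 - 8nS^2 - (2n+3)(\frac32S^2-nS) - \frac32 nS + \frac92 S^2 - \frac{27}{16}S^3$. The $3E$ from Theorem \ref{ddh2<gtr>S3} cancels against the $+3E$ hidden in $(2n+3)E$.

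The remaining task, and the step I expect to be the main obstacle, is to eliminate $E$ pointwise. Here the free constant $b$ enters. I will apply Young's inequality in the form
\[ \frac{63}{4}\sqrt{2E}\,S^2 \leqslant c_1(b)\,E + c_2(b)\,S^4, \]
with constants chosen so that the resulting $S^4$ coefficient becomes $\frac{7}{3b}$ after normalization. The hypothesis $b \geqslant \frac{8}{63}(4S-n)$ is exactly what should make the leftover coefficient of $E$ have the correct sign pointwise, i.e. comparable to $8S-2n = 2(4S-n)$. Then the $E$ terms can be discarded using $E\geqslant 0$, which follows from (\ref{S-2S2<less>2E}).

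I am not fully sure of the direction in which the Young split must be taken. I will first try maximizing the quadratic in $t=\sqrt{E}$, namely $-(8S-2n)t^2 + \frac{63}{4}\sqrt2\,S^2 t$, when $4S>n$. When $4S\leqslant n$ I will instead bound this expression linearly using the $b$-scaled AM--GM, and check that both cases are dominated by the same $b$-dependent expression. Finally I collect terms, divide by a positive normalizing constant, and verify that the cubic $p(S)$ together with the $S^4$ term reorganizes into
\[ \Big[\frac{7}{3b}S^3 - \frac12 S^2 + 7bS - \frac29(21b-2)n\Big]S. \]
If the coefficients do not match directly, the extra slack from dropping $-\frac12\int_M|\nabla S|^2$ and from $E\geqslant0$ is where I would look for adjustment.
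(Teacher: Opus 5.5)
There is a genuine gap, and it sits in the step you flagged. Your chain up to $0\le\int_M[p(S)-(8S-2n)E+\frac{63}{4}\sqrt{2E}\,S^2]$ is correct, with $p(S)=\frac{165}{16}S^3-11nS^2+(2n^2+\frac32 n)S$. It cannot, however, produce the stated inequality.

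\textbf{The sign problem.} After the $S$-weighted Simons substitution, $E$ carries the pointwise coefficient $-(8S-2n)$. You need an upper bound for $-(8S-2n)E+\frac{63}{4}\sqrt{2E}\,S^2$ by a function of $S$. The Young split that yields the required $S^4$ term is $\frac{63}{4}\sqrt{2E}\,S^2\le\frac{63b}{4}E+\frac{63}{8b}S^4$, and it leaves $(\frac{63b}{4}-8S+2n)E$. The hypothesis $b\ge\frac{8}{63}(4S-n)$ says exactly that this coefficient is $\geqslant 0$, so you cannot drop it using $E\ge 0$. The hypothesis bounds $8S-2n$ from above, whereas discarding $E$ would need a lower bound. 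If you instead take a Young constant $\le 8S-2n$, you get an $S$-dependent term like $S^4/(8S-2n)$ rather than $\frac{63}{8b}S^4$. That choice is also impossible where $4S\le n$, since there the expression is increasing in $E$.

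\textbf{The polynomial cannot be repaired either.} With $c=\frac{63b}{4}$ one checks
\[ \frac{27}{8}\Big[\frac{7}{3b}S^3-\frac12S^2+7bS-\frac29(21b-2)n\Big]S=p(S)+\frac{63}{8b}S^4+(c-8S+2n)\Big(\frac32S^2-nS\Big). \]
So the target is your post-Young bound with $E$ replaced by $\frac32S^2-nS$. By Simons' formula the difference is
$\int_M(c-8S+2n)(\frac32S^2-nS-E)=\int_M(c-8S+2n)|\nabla h|^2-4\int_M|\nabla S|^2$, which has no sign. The slack $\int_M|\nabla S|^2$ that you dropped works against you, not for you.

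\textbf{The missing idea.} Use the hypothesis before any integration, against the pointwise nonnegative quantity $|\nabla h|^2$:
$(8S-2n-3)|\nabla h|^2\le(\frac{63b}{4}-3)|\nabla h|^2$. This is precisely the term $\int_M(c-8S+2n)|\nabla h|^2\ge 0$ identified above. After the same Young step, $|\nabla h|^2$ and $E$ appear with the same constant coefficient $3-\frac{63b}{4}$. The unweighted identity $\int_M(|\nabla h|^2+E)=\int_M(\frac32S^2-nS)$ then eliminates both exactly, whatever the sign of that constant. Collecting terms gives $0\ge\frac{27}{8}\int_M[\frac29(21b-2)n-7bS+\frac12S^2-\frac{7}{3b}S^3]S$, which is the claim. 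No $S$-weighted Simons identity is needed.
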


\begin{proof}
  From Theorem \ref{intddh2}, we have
  \[ \int_M | \nabla^2 h |^2 \leqslant \int_M (8 S - 2 n - 3) | \nabla h |^2
     \leqslant \int_M \left( \frac{63 b}{4} - 3 \right) | \nabla h |^2 . \]
  Using Theorem \ref{ddh2<gtr>S3}, we have  
  \begin{align}
    | \nabla^2 h |^2 \geqslant & \frac{3}{2} n S - \frac{9}{2} S^2 +
    \frac{27}{16} S^3 + 3 E - \frac{63}{4}  \sqrt{2 E} S^2 \nonumber\\
    \geqslant & \frac{3}{2} n S - \frac{9}{2} S^2 + \frac{27}{16} S^3 + 3 E -
    \frac{63 b}{4} E - \frac{63}{8 b} S^4 . \nonumber
  \end{align}
  
  Combining these estimates, we obtain
  \[ \int_M \left( \frac{63 b}{4} - 3 \right) | \nabla h |^2 \geqslant \int_M
     \left[ \frac{3}{2} n S - \frac{9}{2} S^2 + \frac{27}{16} S^3 + 3 E -
     \frac{63 b}{4} E - \frac{63}{8 b} S^4 \right] . \]
  Equivalently,
  \[ 0 \geqslant \int_M \left[ \frac{3}{2} n S - \frac{9}{2} S^2 +
     \frac{27}{16} S^3 - \frac{63}{8 b} S^4 + \left( 3 - \frac{63 b}{4}
     \right)  (E + | \nabla h |^2) \right] . \]
  By (\ref{LapS}) we get
  \[ \int_M (| \nabla h |^2 + E) = \int_M \left( \frac{3}{2} S^2 - n S \right)
     . \]
  Therefore, we obtain  
  \begin{align}
    0 \geqslant & \int_M \left[ \frac{3}{2} n S - \frac{9}{2} S^2 +
    \frac{27}{16} S^3 - \frac{63}{8 b} S^4 + \left( 3 - \frac{63 b}{4} \right)
    \left( \frac{3}{2} S^2 - n S \right) \right] \nonumber\\
    = & \int_M \frac{27}{8} \left[ \frac{2}{9} (21 b - 2) n - 7 bS +
    \frac{1}{2} S^2 - \frac{7}{3 b} S^3 \right] S. \nonumber
  \end{align}
  
  \ 
\end{proof}

As a consequence, we obtain a pinching theorem.

\begin{theorem}
  Let $M$ be an $n (\geqslant 3)$-dimensional compact minimal submanifold in
  $\mathbb{S}^{n + p}$. If $S \leqslant \frac{2 n}{3} + \frac{n}{500} -
  \frac{1}{180}$, then $S \equiv 0$.
\end{theorem}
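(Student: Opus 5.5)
The plan is to apply Theorem \ref{21b-2} with a constant $b$ chosen as an explicit multiple of $n$, and then to show that the cubic
\[ f(S) = \frac{7}{3b} S^3 - \frac{1}{2} S^2 + 7 b S - \frac{2}{9} (21 b - 2) n \]
is strictly negative for every $0 < S \leqslant S_0$, where $S_0 = \frac{2n}{3} + \frac{n}{500} - \frac{1}{180}$. Granting this, the integrand $f(S) S$ in Theorem \ref{21b-2} is $\leqslant 0$ everywhere, and it is $<0$ wherever $S>0$. Since the integral is $\geqslant 0$, the continuous function $f(S)S$ must vanish identically, so $S \equiv 0$.

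\emph{Admissibility and reduction to one inequality.} I take $b = c n$ with $c$ a fixed constant near $7$, tuned below. The admissibility condition $b \geqslant \frac{8}{63}(4S - n)$ is immediate from $S \leqslant S_0 < \frac{2n}{3} + \frac{n}{500}$, because $\frac{8}{63}\bigl(\frac{5}{3} + \frac{4}{500}\bigr) n < 0.22\, n \leqslant c n$.

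Next, $f$ is strictly increasing in $S$. Indeed
\[ f'(S) = \frac{7}{b} S^2 - S + 7b \]
is a quadratic in $S$ with discriminant $1 - 4\cdot 7\cdot 7 = -195 < 0$, so $f' > 0$. It therefore suffices to check the single inequality $f(S_0) < 0$. Also $f(0) = -\frac{2}{9}(21b - 2)n < 0$ because $21 c n > 2$.

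\emph{Heuristic choice of $c$.} Write $S = \frac{2n}{3} + \delta n + O(1)$ with $\delta = \frac{1}{500}$. Then
\[ 7bS - \frac{14}{3} b n = 7 c \delta n^2 + O(n), \qquad \frac{7}{3b} S^3 \approx \frac{56}{81 c} n^2, \qquad \frac{1}{2} S^2 \approx \frac{2}{9} n^2 . \]
So the coefficient of $n^2$ in $f(S_0)$ is
\[ -\Bigl(\frac{2}{9} - \frac{56}{81c} - 7c\delta\Bigr). \]
This is optimized at $c = \sqrt{8/(81\delta)} = \sqrt{4000/81} \approx 7.03$. For such $c$ the bracket is about $0.2222 - 0.197 \approx 0.025 > 0$, so the leading term is safely negative. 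In the final write-up I would fix a rational value such as $c = 7$, i.e.\ $b = 7n$.

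\emph{Main obstacle: the exact verification.} The real work is to evaluate $f(S_0)$ exactly with $b = 7n$ (or the chosen $c$) and confirm it is negative for all $n \geqslant 3$, not only asymptotically. The lower-order contributions are the $+\frac{4}{9}n$ from $\frac{2}{9}\cdot 2n$, the cross terms produced by the $-\frac{1}{180}$ in $S_0$, and the $O(n)$ corrections in $\frac{1}{2}S^2$ and $\frac{7}{3b}S^3$. The constant $-\frac{1}{180}$ is designed to absorb these: it contributes $-\frac{7b}{180} = -\frac{49}{180}n$ through the $7bS$ term, which partly offsets the $+\frac{4}{9}n$. My plan is to expand $f(S_0)$ as $\alpha n^2 + \beta n + \gamma + \gamma'/n$ with explicit rationals, where $\alpha<0$ by the computation above. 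I would then check directly that the polynomial is negative at $n = 3$ and that it is decreasing in $n$ for $n\geqslant 3$ (or bound the lower-order part by $|\alpha| n^2$ once $n \geqslant 3$). If the margin turns out to be too thin at $n=3$, I would re-tune $c$ within the admissible window to recover negativity.
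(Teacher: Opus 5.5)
Your framework is the paper's: check admissibility of $b$, show $f$ is increasing, verify the single inequality $f(S_0)<0$, and conclude from $f(S)S\le 0$, with equality only where $S=0$. Those parts are fine. Your discriminant argument ($1-196<0$) gives $f'>0$ everywhere, which is cleaner than the paper's ``$f'>0$ for $s<7b$''. The gap is the step you deferred, and your concrete choice $b=7n$ fails there. Write $S_0=an-\frac{1}{180}$ with $a=\frac{1003}{1500}$ and $b=cn$. An exact expansion gives
\[ f(S_0)=\Bigl(\frac{7a^3}{3c}-\frac{a^2}{2}+\frac{7c}{500}\Bigr)n^2+\Bigl(\frac49-\frac{7c}{180}+\frac{a}{180}-\frac{7a^2}{180c}\Bigr)n+\frac{1}{180^2}\Bigl(\frac{7a}{c}-\frac12\Bigr)-\frac{7}{3\cdot 180^3\, c\, n}. \]
For $c=7$ this is approximately $-0.0259\,n^2+0.1735\,n$, which is positive for $3\le n\le 6$. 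Directly, at $n=3$, $b=21$, one gets $f(S_0)\approx 0.287>0$.

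This is not a thin margin but the wrong sign on the whole small-$n$ range. Your heuristic optimizes only the $n^2$ coefficient. With $c=7$, the $-\frac{1}{180}$ in $S_0$ offsets only $\frac{49}{180}n$ of the $+\frac{80}{180}n$, as you noticed yourself. The leftover $\approx 0.17\,n$ beats $-0.026\,n^2$ until $n\approx 7$.

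The repair is also not a small re-tuning near your optimum $7.03$. At $n=3$, $f(S_0)<0$ holds exactly for $b\in(33.1,\,181.5)$, i.e.\ $c>11.03$. The $n^2$ coefficient is negative only for $4.25<c<11.72$. So a uniform choice $b=cn$ must lie in the narrow band $11.03<c\le 11.72$. For instance, $c=\frac{23}{2}$ makes every coefficient in the display negative and would give a valid proof.

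The paper instead takes $b=\frac{6n^2}{n-2}=6n+12+\frac{24}{n-2}$. This equals $18n$ at $n=3$ and behaves like $6n$ for large $n$, and with it the paper checks $f\bigl(\frac{1003n}{1500}-\frac{1}{180}\bigr)<0$. As written, your proposal leaves this decisive choice and verification undone, and the one explicit choice you commit to is wrong for $n=3,\dots,6$.
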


\begin{proof}
  Let
  \[ b = \frac{6 n^2}{n - 2} . \]
  Since $b \geqslant \frac{8}{63} (4 S - n)$, the integral inequality in Theorem
  \ref{21b-2} is valid.
  
  Define a function
  \[ f (s) = \frac{7}{3 b} s^3 - \frac{1}{2} s^2 + 7 bs - \frac{2}{9}  (21 b -
     2) n. \]
  Differentiating, we obtain
  \[ f' (s) = \frac{7}{b} s^2 - s + 7 b. \]
  Thus $f' (s) > 0$ if $s < 7 b$. By a direct calculation, we get $f (\frac{1003
  n}{1500} - \frac{1}{180}) < 0$. Hence $f (s) < 0$ for all $s \leqslant
  \frac{1003 n}{1500} - \frac{1}{180}$.
  
  From Theorem \ref{21b-2}, we have
  \[ \int_M f (S) S \geqslant 0. \]
  Hence we obtain $S \equiv 0$.
\end{proof}


\begin{thebibliography}{10}
  \bibitem{Chang1993}S. P. Chang, On minimal hypersurfaces with constant
  scalar curvatures in $S^4$, {\em{J. Differential Geom.}},
  \textbf{37}(1993), 523-534.
  
  \bibitem{Calabi}E. Calabi, Minimal immersions of surfaces in Euclidean
  spheres, {\em{J. Differential Geom.}}, \textbf{1}(1967), 111-125.
  
  \bibitem{CX}Q. Chen and S. L. Xu, Rigidity of compact minimal submanifolds
  in a unit sphere, {\em{Geom. Dedicata}}, \textbf{45}(1993), 83-88.
  
  \bibitem{CI1999}Q. M. Cheng and S. Ishikawa, A characterization of the
  Clifford torus, {\em{Proc. Amer. Math. Soc.}}, \textbf{127}(1999),
  819-828.
  
  \bibitem{Cheng1997}S. Y. Cheng, On the Chern conjecture for minimal
  hypersurface with constant scalar curvatures in the spheres, Tsing Hua
  Lectures on Geometry and Analysis, International Press, Cambridge, MA, 1997,
  59-78.
  
  \bibitem{Chern1968}S. S. Chern, Minimal submanifolds in a Riemannian
  manifold. University of Kansas, Department of Mathematics Technical Report
  19, Univ. of Kansas, Lawrence, Kan., 1968.
  
  \bibitem{CdK1970}S. S. Chern, M. do Carmo and S. Kobayashi, Minimal
  submanifolds of a sphere with second fundamental form of constant length,
  {\em{Functional Analysis and Related Fields}}, Springer-Verlag, Berlin,
  1970, 59-75.
  
  \bibitem{DGW2017}Q. T. Deng, H. L. Gu and Q. Y. Wei, Closed Willmore
  minimal hypersurfaces with constant scalar curvature in $\mathbb{S}^5 (1)$
  are isoparametric, {\em{Adv. Math.}}, \textbf{314}(2017), 278-305.
  
  \bibitem{DK}Q. T. Deng and Y. J. Kou, Closed Minimal Hypersurfaces in
  $\mathbb{S}^5 (1)$ with Constant Scalar and Gauss-Kronecker Curvatures,
  arXiv:2607.06588, 2026.
  
  \bibitem{DX2011}Q. Ding and Y. L. Xin, On Chern's problem for rigidity of
  minimal hypersurfaces in the spheres, {\em{Adv. Math.}},
  \textbf{227}(2011), 131-145.
  
  \bibitem{DGL2025}W. R. Ding, J. Q. Ge and F. G. Li, Pinching rigidity of
  minimal surfaces in spheres, {\em{Sci. China Math.}}, \textbf{68}(2025),
  2189-2206.
  
  \bibitem{DGL2026}W. R. Ding, J. Q. Ge and F. G. Li, On Simon's third gap
  conjecture for minimal surfaces in spheres, arXiv:2603.03070v3, 2026.
  
  \bibitem{GLZ}J. Q. Ge, F. G. Li, and Y. H. Zhang, On Chern's conjecture
  for minimal submanifolds with flat normal bundle in spheres, preprint, 2026,
  arXiv:2607.10733, 2026.
  
  \bibitem{GLLY}J. Q. Ge, T. Liu, K. Y. Luo and W. J. Yan, Rigidity of
  closed minimal hypersurfaces in $\mathbb{S}^5$, arXiv:2606.29246, 2026.
  
  \bibitem{GT2008}J. Q. Ge and Z. Z. Tang, A proof of the DDVV conjecture
  and its equality case, {\em{Pacific J. Math.}}, \textbf{237}(2008),
  87-95.
  
  \bibitem{GT2012}J. Q. Ge and Z. Z. Tang, Chern conjecture and
  isoparametric hypersurfaces, Differential geometry, Adv. Lect. Math., 22,
  International Press, Somerville, MA, 2012, 49-60.
  
  \bibitem{GXXZ2016}J. R. Gu, H. W. Xu, Z. Y. Xu and E. T. Zhao, A survey on
  rigidity problems in geometry and topology of submanifolds, Proceedings of
  the 6th International Congress of Chinese Mathematicians, {\em{Adv. Lect.
  Math.}}, \textbf{37}, Higher Education Press \& International Press,
  Beijing-Boston, 2016, 79-99.
  
  \bibitem{HXZ2026}C. C. He, H. W. Xu and E. T. Zhao, Classification of
  closed minimal hypersurfaces with constant scalar curvature in
  $\mathbb{S}^5$, arXiv:2603.01181, 2026.
  
  \bibitem{KS} M. Kozlowski and U. Simon, Minimal immersions of 2-manifolds
  into spheres, {\em{Math. Z.}}, \textbf{186}(1984), 377-382.
  
  \bibitem{LXX2017}L. Lei, H. W. Xu and Z. Y. Xu, On Chern's conjecture for
  minimal hypersurfaces in spheres, arXiv:1712.01175v1.
  
  \bibitem{LL1992}A. M. Li and J. M. Li, An intrinsic rigidity theorem for
  minimal submanifolds in a sphere, {\em{Arch. Math. (Basel)}},
  \textbf{58}(1992), 582-594.
  
  \bibitem{Lu}Z. Q. Lu, Normal scalar curvature conjecture and its
  applications, {\em{J. Funct. Anal.}} \textbf{261} (2011), 1284-1308.
  
 
  \bibitem{PT1983a}C. K. Peng and C. L. Terng, Minimal hypersurfaces of
  sphere with constant scalar curvature, {\em{Ann. of Math. Stud.}},
  \textbf{103}, Princeton Univ. Press, Princeton, NJ, 1983, 177-198.
  
  \bibitem{PT1983b}C. K. Peng and C. L. Terng, The scalar curvature of
  minimal hypersurfaces in spheres, {\em{Math. Ann.}}, \textbf{266}(1983),
  105-113.
  
  \bibitem{SWY}M. Scherfner, S. Weiss and S. T. Yau, A review of the Chern
  conjecture for isoparametric hypersurfaces in spheres, Advances in Geometric
  Analysis, {\em{Adv. Lect. Math.}}, \textbf{21}, International Press,
  Somerville, MA, 2012, 175-187.
  
  \bibitem{Simons1968}J. Simons, Minimal varieties in Riemannian manifolds,
  {\em{Ann. of Math.}}, \textbf{88}(1968), 62-105.
  
  \bibitem{SY2007}Y. J. Suh and H. Y. Yang, The scalar curvature of minimal
  hypersurfaces in a unit sphere, {\em{Commun. Contemp. Math.}},
  \textbf{9}(2007), 183-200.
  
  \bibitem{WX2007}S. M. Wei and H. W. Xu, Scalar curvature of minimal
  hypersurfaces in a sphere, {\em{Math. Res. Lett.}}, \textbf{14}(2007),
  423-432.
  
  \bibitem{Xin}Y. L. Xin, Minimal submanifolds and related topics (Second
  Edition), Nankai Tracts in Mathematics, 16, World Scientific Publishing Co.,
  Inc., River Edge, NJ, 2018.
  
  
  \bibitem{XX2017}H. W. Xu and Z. Y. Xu, On Chern's conjecture for minimal
  hypersurfaces and rigidity of self-shrinkers, {\em{J. Funct. Anal.}},
  \textbf{273}(2017), 3406-3425.
  
  
  \bibitem{YC1990}H. C. Yang and Q. M. Cheng, A note on the pinching
  constant of minimal hypersurfaces with constant scalar curvature in the unit
  sphere, {\em{Kexue Tongbao}}, \textbf{35}(1990), 167-170; {\em{Chinese
  Sci. Bull.}}, \textbf{36}(1991), 1-6.
  
  \bibitem{YC1994}H. C. Yang and Q. M. Cheng, An estimate of the pinching
  constant of minimal hypersurfaces with constant scalar curvature in the unit
  sphere, {\em{Manuscripta Math.}}, \textbf{84}(1994), 89-100.
  
  \bibitem{YC1998}H. C. Yang and Q. M. Cheng, Chern's conjecture on minimal
  hypersurfaces, {\em{Math. Z.}}, \textbf{227}(1998), 377-390.
  
  
  \bibitem{Zhang2010}Q. Zhang, The pinching constant of minimal
  hypersurfaces in the unit spheres, {\em{Proc. Amer. Math. Soc.}},
  \textbf{138}(2010), 1833-1841.
\end{thebibliography}
\end{document}